\date{September 10, 2015}
\title{Periodic-parabolic eigenvalue problems with a large parameter
  and degeneration}
\author{Daniel Daners}%
\author{Christopher Thornett\thanks{Current address: Courant Institute
    of Mathematical Sciences, New York University, 251 Mercer Street,
    New York, NY~10012, USA}}%
\affil{School of Mathematics and Statistics University of Syndey, NSW
  2006, Australia\authorcr \nolinkurl{daniel.daners@sydney.edu.au}%
  \qquad\nolinkurl{thornett@cims.nyu.edu}}%
\numberwithin{equation}{section}
\numberwithin{figure}{section}
\theoremstyle{plain}
\newtheorem{theorem}{Theorem}[section]
\newtheorem{proposition}[theorem]{Proposition}
\newtheorem{lemma}[theorem]{Lemma}
\newtheorem{corollary}[theorem]{Corollary}
\theoremstyle{definition}
\newtheorem{definition}[theorem]{Definition}
\newtheorem{assumption}[theorem]{Assumption}
\theoremstyle{remark}
\newtheorem{remark}[theorem]{Remark}
\newtheorem{example}[theorem]{Example}
\DeclareMathOperator{\divergence}{div}
\DeclareMathOperator{\spr}{spr}
\DeclareMathOperator{\interior}{int}
\DeclareMathOperator{\supp}{supp}
\DeclareMathOperator{\one}{\mathbf 1}
\let\oldthebibliography\thebibliography
\renewcommand\thebibliography[1]{
  \oldthebibliography{#1}
  \setlength{\parskip}{0pt}
  \setlength{\itemsep}{0pt plus 0.3ex}
  \small
}
\begin{document}

\maketitle

\begin{abstract}
  We consider a periodic-parabolic eigenvalue problem with a
  non-negative potential $\lambda m$ vanishing on a non-cylindrical
  domain $D_m$ satisfying conditions similar to those for the parabolic
  maximum principle. We show that the limit as $\lambda\to\infty$ leads
  a periodic-parabolic problem on $D_m$ having a unique
  periodic-parabolic principal eigenvalue and eigenfunction. We
  substantially improve a result from [Du \& Peng,
  Trans. Amer. Math. Soc. 364 (2012), p.~6039--6070]. At the same time
  we offer a different approach based on a periodic-parabolic initial
  boundary value problem. The results are motivated by an analysis of
  the asymptotic behaviour of positive solutions to semilinear logistic
  periodic-parabolic problems with temporal and spacial degeneracies.
\end{abstract}

{ %Provide MSC and Keywords
\renewcommand{\thefootnote}{}
\footnotetext{\textbf{Mathematical Subject Classification (2010):} 35K20, 35P15,
  35B10}
\footnotetext{\textbf{Keywords:} %
  non-autonomous parabolic initial-boundary value problem; %
  periodic-parabolic eigenvalue problem; %
  limit problems for a large parameter;%
  spatial and temporal degeneration
}
}
\section{Introduction}
\label{sec:intro}
We consider a periodic-parabolic eigenvalue problem arising in the study
of the asymptotic behaviour of positive solutions to a $T$-periodic
logistic type population problem such as first studied in
\cite{hess:91:ppb,beltramo:84:pep} and later in
\cite{alvarez:08:abp,alvarez:08:cabp,du:99:bsc,du:12:ple,du:09:ltl,lopez:96:mpe}. The
limiting behaviour of the eigenvalue problem allows to deduce
information about the corresponding logistic-type semilinear
problem. Our focus is in on the case of temporal and spacial
degeneracies motivated in particular in \cite{du:12:ple}.

More precisely, we are interested in the behaviour of the principal
eigenvalue for the periodic-parabolic eigenvalue problem
\begin{equation}
  \label{eq:pevp}
  \begin{aligned}
    \frac{\partial u}{\partial t}+\mathcal A(t)u+\lambda m(x,t)u
    &=\mu(\lambda)u
    &&\text{in $\Omega\times(0,T)$,}\\
    \mathcal B(t)u&=0
    &&\text{in $\partial\Omega\times(0,T)$,}\\
    u(x,0)&=u(x,T)&&\text{in $\Omega$,}
  \end{aligned}
\end{equation}
as $\lambda\to\infty$, where $m\in
L^\infty\bigl(\Omega\times(0,T)\bigr)$ is a non-negative weight function
that has a non-trivial zero set satisfying suitable
assumptions. Moreover, $\Omega\subseteq\mathbb R^N$ is a bounded domain,
and
\begin{equation}
  \label{eq:A}
  \mathcal A(t)u:=
  -\divergence\bigl(D(x,t)\nabla u+a(x,t)u\bigr)+\bigl(b(x,t)\cdot\nabla
  u+c_0(x,t)u\bigr)
\end{equation}
is a uniformly strongly elliptic operator with bounded and measurable
coefficients and $\mathcal B(t)$ a boundary operator of Dirichlet,
Neumann or Robin type (for precise assumptions see
Section~\ref{sec:weak-solutions}).

As in \cite{hess:91:ppb}, a principal eigenvalue of \eqref{eq:pevp} is
an eigenvalue having a \emph{positive} eigenvector. If $m(x,t)>0$ on
$\Omega\times(0,T)$ nothing interesting happens, so we focus on the case
where $m(x,t)=0$ in some region $D_m\subseteq\Omega\times[0,T]$ of
non-zero measure. Such problems have been looked at in particular for
the corresponding elliptic problem in
\cite{alvarez:08:abp,dancer:11:lps,lopez:96:mpe}. The most general
weights $m$ are considered in
\cite{alvarez:14:qac,du:12:ple,peng:15:lbc}, where spacial and temporal
degeneration is allowed. Our aim is to simplify and generalise some of
these results using an alternative method and allowing fully
non-automonous operators $(\mathcal A(t),\mathcal B(t))$ including the
principal part.

The approach we take is quite different from previous work and related
to the one used in \cite{daners:13:epc} for elliptic systems. Rather
than studying the eigenvalue problem \eqref{eq:pevp} directly we study
what happens to the solution to
\begin{equation}
  \label{eq:hivp}
  \begin{aligned}
    \frac{\partial u}{\partial t}+\mathcal A(t)u+\lambda m(x,t)u
    &=0
    &&\text{in $\Omega\times(s,T)$,}\\
    \mathcal B(t)u&=0
    &&\text{in $\partial\Omega\times(s,T)$,}\\
    u(x,s)&=u_s(x)&&\text{in $\Omega$,}
  \end{aligned}
\end{equation}
as $\lambda\to\infty$, where $s\in[0,T)$. We consider the behaviour of
weak solutions of \eqref{eq:hivp} with a non-zero right hand side as
$\lambda\to\infty$ in Section~\ref{sec:weak-solutions}. In
Section~\ref{sec:Lp-solutions} we show that for every initial value
$u_0\in L^p(\Omega)$ the problem \eqref{eq:hivp} has a unique solution
$u\in C([s,T],L^p(\Omega))$. This in particular allows us to define the
evolution operator $U_\lambda(t,s)$ by
\begin{equation}
  \label{eq:evolution-operator}
  U_\lambda(t,s)u_s:=u(t).
\end{equation}
Letting $\lambda\to\infty$ we get an evolution operator $U_\infty(t,s)$
(not necessarily strongly continuous at $t=s$). We show that
$U_\lambda(t,s)$ has uniform Gaussian kernel estimates, which lead to
uniform $L^p$-$L^q$ estimates for solutions of \eqref{eq:hivp} and the
limit problem as $\lambda\to\infty$.

Our main results on the convergence and existence of principal
periodic-parabolic eigenvalues and eigenfunctions are then given in
Section~\ref{sec:periodic}, in particular
Theorem~\ref{thm:pp-eigenvalue}. The idea is to use continuity
properties of eigenvalues similarly as in
\cite{daners:00:epp,daners:05:pse}. The result generalises
\cite[Theorem~3.3 and~3.4]{du:12:ple} significantly by allowing much
more general conditions on $m$. In the final section we show that our
conditions on $m$ are in some sense optimal.

\section{Convergence of weak solutions}
\label{sec:weak-solutions}
Before stating our main result we make our assumptions precise. We
consider a boundary operator of the form \eqref{eq:A} with $D\in
L^\infty(\Omega\times(0,T),\mathbb R^{N\times N})$, $a,b\in
L^\infty(\Omega\times(0,T),\mathbb R^N)$ and $c_0\in
L^\infty(\Omega\times(0,T),\mathbb R)$. We assume that $\mathcal A(t)$
is uniformly strongly elliptic, that is, the matrix $D(x,t)$ is positive
definite uniformly with respect to $(x,t)$. Hence there exists
$\alpha>0$ such that
\begin{displaymath}
  y^TD(x,t)y\ge\alpha|y|^2
\end{displaymath}
for all $y\in\mathbb R^N$ and almost all
$(x,t)\in\Omega\times(0,T)$. We admit boundary operators of the form
\begin{displaymath}
  \mathcal B(t)u:=
  \begin{cases}
    u|_{\partial\Omega}&\text{Dirichlet boundary operator}\\
    \bigl(D\nabla u+au\bigr)\cdot\nu+b_0u&\text{Neumann or Robin
      boundary operator}
  \end{cases}
\end{displaymath}
where $\nu$ is the outward pointing unit normal to $\Omega$ and $b_0\in
L^\infty(\partial\Omega)$. If $b_0=0$ we have (natural) Neumann boundary
conditions, and if $b_0\neq 0$ we have Robin boundary conditions. In
case of Dirichlet boundary conditions we admit any bounded domain
$\Omega\subseteq\mathbb R^N$. In case of Neumann or Robin boundary we
assume that $\Omega$ is a Lipschitz domain. In we can then assume
without loss of generality that $b_0>0$ on $\partial\Omega$ as shown in
\cite{daners:09:ipg}. We could also have Dirichlet and Robin boundary
conditions on disjoint parts of $\partial\Omega$. We finally assume that
$m\in L^\infty(\Omega\times(0,T))$ is non-negative and that
$\lambda\in\mathbb R$.

We use the theory of variational evolution equations as presented in
\cite{dautray:92:man5,lions:72:nbv} to study the initial value problem
\begin{equation}
  \label{eq:ivbp}
  \begin{aligned}
    \frac{\partial u}{\partial t}+\mathcal A(t)u+\lambda mu
    &=f(x,t)
    &&\text{in $\Omega\times(s,T)$,}\\
    \mathcal B(t)u&=0
    &&\text{in $\partial\Omega\times(s,T)$,}\\
    u(x,s)&=u_s&&\text{in $\Omega$,}
  \end{aligned}
\end{equation}
as $\lambda\to\infty$. We first look at the $L^2$-theory, and then by
means of heat kernel estimates generalise to an $L^p$-theory.  For
$t\in[0,T]$ we introduce the bilinear forms
\begin{displaymath}
  a(t,u,v):=\int_\Omega\bigl(D\nabla u+au\bigr)\nabla v
  +\bigl(b\cdot\nabla u+c_0u\bigr)v\,dx
  +\int_{\partial\Omega}b_0uv\,d\sigma,
\end{displaymath}
where $d\sigma$ denotes integration with respect to surface measure on
$\partial\Omega$.  That bilinear form is defined on the space
\begin{displaymath}
  V:=
  \begin{cases}
    H_0^1(\Omega)&\text{in case of Dirichlet boundary conditions,}\\
    H^1(\Omega)&\text{in case of Robin or Neumann boundary conditions.}\\
  \end{cases}
\end{displaymath}
For Dirichlet (or Neumann) boundary conditions the boundary integral is
not present. From the assumptions on the coefficients of $\mathcal A(t)$
there exists a constant $M\geq 0$ such that
\begin{displaymath}
  |a(t,u,v)|\leq M\|u\|_V\|v\|_V
\end{displaymath}
for all $u,v\in V$ and $t\in[0,T]$. This is also true for Robin boundary
conditions since in that case we assume that $\Omega$ is a Lipschitz
domain and therefore we can use a trace inequality to estimate the
boundary integral. Further, one can show that
\begin{displaymath}
  \frac{\alpha}{2}\|u\|_V^2\leq a(t,u,u)+\gamma\|u\|_2^2
\end{displaymath}
for all $u\in V$ and $t\in[0,T]$, where
\begin{equation}
  \label{eq:gamma}
  \gamma\geq\gamma_0
  :=\frac{\|a\|_\infty+\|b\|_\infty}{2\alpha}+\|c_0^-\|_\infty
\end{equation}
(see for instance \cite[Prop~2.4]{daners:09:ipg}). Naturally,
$V\hookrightarrow L^2(\Omega)$ is compactly embedded. Identifying
$L^2(\Omega)$ with its dual,
\begin{math}
  V\hookrightarrow L_2(\Omega)\hookrightarrow V'
\end{math}
are dense and compact embeddings, where $V'$ is the dual of $V$. In that
case, duality is given by
\begin{displaymath}
  \langle u,v\rangle:=\int_\Omega u(x)v(x)\,dx
\end{displaymath}
for $u\in L^2(\Omega),\,v\in V$. Given $f\in L^2((s,T),V')$ we call
$u\in L^2((s,T),V)$ a weak solution of \eqref{eq:ivbp} if
\begin{multline}
  \label{eq:weaksol}
  -\int_s^T\langle \dot v(t),u(t)\rangle\,dt
  -\langle u_s,v(s)\rangle+\int_s^Ta(t,u(t),v(t))\,dt\\
  +\lambda\int_s^T\langle mu(t),v(t)\rangle\,dt
  =\int_s^T\langle f(t),v(t)\rangle\,dt
\end{multline}
for all $v\in C^\infty_c([s,T))\otimes V$. We then introduce the
spaces
\begin{displaymath}
  W(s,T,V,V')
  :=\bigl\{u\in L^2((s,T),V)\colon \dot u\in L^2((s,T),V')\bigr\},
\end{displaymath}
for $s\in[0,T)$, where $V'$ is the dual space of $V$ and $\dot u$ is the
derivative with respect to $t$ in the sense of distributions with values
in $V'$. The space $W(s,T,V,V')$ is a Hilbert space with the norm
\begin{displaymath}
  \|u\|_W
  :=\Bigl(\int_s^T\|u(t)\|_V^2\,dt
  +\int_s^T\|\dot u(t)\|_{V'}\,dt\Bigr)^{1/2}.
\end{displaymath}
The space $W(s,T,V,V')$ has some useful properties. First of all we have the
embedding
\begin{equation}
  \label{eq:Wcontinuous}
  W(s,T,V,V')\hookrightarrow C([s,T], L^2(\Omega)).
\end{equation}
For this reason it makes sense to write $u(t)$ for $t\in[0,T]$. Moreover
the embedding
\begin{equation}
  \label{eq:Wcompact}
  W(s,T,V,V')\hookrightarrow L^2((s,T), L^2(\Omega))
\end{equation}
is compact.  We also have the formula of integration by parts
\begin{equation}
  \label{eq:ip}
  \int_s^t\langle \dot u(\tau),v(\tau)\rangle\,d\tau
  +\int_s^t\langle u(\tau),\dot v(\tau)\rangle\,d\tau
  =\langle u(t),\dot v(t)\rangle-\langle u(s),\dot v(s)\rangle
\end{equation}
for all $u,v\in W(0,T,V,V')$ and $0\leq s\leq t\leq T$. Finally, one may
show that $C_c^\infty([s,T))\otimes V$ is dense in $\{v\in
W(s,T,V,V')\colon v(T)=0\}$. This implies that we may test all $v\in
W(s,T,V,V')$ with $v(T)=0$ in the definition \eqref{eq:weaksol} of a
weak solution. By defining the operators $A(t)\in\mathcal L(V,V')$ by
$\langle A(t)u,v\rangle=a(t,u,v)$, we note that $u$ is a weak solution
if and only if $u\in W(s,T,V,V')$ and
\begin{equation}
  \label{eq:weaksolalt}
  \begin{aligned}
    \dot u(t) + A(t)u(t) + \lambda m(t)u (t)&= f(t)
    \qquad\text{for $t\in(s,T]$,}\\
    u(s)&=u_s&&
  \end{aligned}
\end{equation}
where equality in the first line is in the sense of
$L^2\bigl((s,T),V'\bigr)$. For all these facts see
\cite[Section~XVIII.\S1.2]{dautray:92:man5} and
\cite[Theorem~1.5.1]{lions:69:qmr} for the compact embedding
\eqref{eq:Wcompact}. We first prove some a priori estimates.
\begin{proposition}
  \label{prop:apriori}
  Suppose that $f\in L^2\bigl((0,T),V'\bigr)$ and $\gamma$ as in
  \eqref{eq:gamma}. If $u$ is a weak solution of \eqref{eq:ivbp}, then
  \begin{multline*}
    \frac{1}{2}\|u(t)\|_2^2
    +\frac{\alpha}{4}\int_s^t\|e^{\gamma(t-\tau)}u(\tau)\|_V^2\,d\tau
    +\lambda\int_s^t\langle m(\tau)u(\tau),u(\tau)\rangle
    e^{2\gamma(t-\tau)}\,d\tau\\
    \leq\frac{1}{2}\|u(s)e^{\gamma(t-s)}\|_2^2
    +\frac{1}{\alpha}
    \int_s^t e^{2\gamma(t-\tau)}\|f(\tau)\|_{V'}^2\,d\tau
  \end{multline*}
  for all $0\leq s\leq t\leq T$ and all $\lambda\geq 0$.
\end{proposition}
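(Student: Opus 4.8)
The plan is a weighted energy estimate with integrating factor $e^{\gamma(t-\tau)}$, chosen so that the zero-order terms produced by the coercivity estimate cancel. Since $u$ is a weak solution, by \eqref{eq:weaksolalt} we have $u\in W(s,T,V,V')$ and $\dot u(\tau)+A(\tau)u(\tau)+\lambda m(\tau)u(\tau)=f(\tau)$ in $L^2\bigl((s,T),V'\bigr)$, with $u(s)=u_s$. Fix $t\in(s,T]$ and set $v(\tau):=e^{\gamma(t-\tau)}u(\tau)$. Multiplying the $V$-valued map $u$ by the scalar $C^\infty$-function $\tau\mapsto e^{\gamma(t-\tau)}$ preserves membership in $W(s,t,V,V')$, and since $\dot v=-\gamma v+e^{\gamma(t-\tau)}\dot u$ in the sense of $V'$-valued distributions, multiplying the equation by $e^{\gamma(t-\tau)}$ gives
\begin{displaymath}
  \dot v(\tau)+A(\tau)v(\tau)+\gamma v(\tau)+\lambda m(\tau)v(\tau)
  =e^{\gamma(t-\tau)}f(\tau)
\end{displaymath}
in $L^2\bigl((s,t),V'\bigr)$, with $v(t)=u(t)$ and $v(s)=e^{\gamma(t-s)}u(s)$.

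Next I would pair this identity with $v(\tau)\in V$ and integrate over $[s,t]$. The crossed term is an exact derivative: by the integration-by-parts formula \eqref{eq:ip} applied with $u=v$ the map $\tau\mapsto\|v(\tau)\|_2^2$ is absolutely continuous on $[s,t]$ with $\frac{d}{d\tau}\|v(\tau)\|_2^2=2\langle\dot v(\tau),v(\tau)\rangle$ for a.e.\ $\tau$, so $\int_s^t\langle\dot v,v\rangle\,d\tau=\frac12\|v(t)\|_2^2-\frac12\|v(s)\|_2^2$. For the form term I would apply the coercivity inequality $\frac{\alpha}{2}\|w\|_V^2\leq a(\tau,w,w)+\gamma\|w\|_2^2$ (valid for $\gamma\geq\gamma_0$ as in \eqref{eq:gamma}) pointwise in $\tau$ with $w=v(\tau)$; the $\gamma\|v\|_2^2$ it produces cancels the $\gamma v$ term. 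This leaves
\begin{displaymath}
  \tfrac12\|u(t)\|_2^2-\tfrac12\|e^{\gamma(t-s)}u(s)\|_2^2
  +\tfrac{\alpha}{2}\int_s^t\|v(\tau)\|_V^2\,d\tau
  +\lambda\int_s^t\langle m(\tau)v(\tau),v(\tau)\rangle\,d\tau
  \leq\int_s^t\langle e^{\gamma(t-\tau)}f(\tau),v(\tau)\rangle\,d\tau.
\end{displaymath}

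Finally I would bound the right-hand side by Cauchy--Schwarz in $V'\times V$ followed by Young's inequality in the form $ab\leq\frac{\alpha}{4}a^2+\frac{1}{\alpha}b^2$ with $a=\|v(\tau)\|_V$ and $b=e^{\gamma(t-\tau)}\|f(\tau)\|_{V'}$, absorbing $\frac{\alpha}{4}\int_s^t\|v\|_V^2\,d\tau$ into the left-hand side and leaving $\frac{1}{\alpha}\int_s^t e^{2\gamma(t-\tau)}\|f(\tau)\|_{V'}^2\,d\tau$ on the right. Undoing the substitution, $\|v(\tau)\|_V^2=\|e^{\gamma(t-\tau)}u(\tau)\|_V^2$ and $\langle m(\tau)v(\tau),v(\tau)\rangle=e^{2\gamma(t-\tau)}\langle m(\tau)u(\tau),u(\tau)\rangle$, which gives the claimed estimate; the $\lambda m$-term is genuinely nonnegative since $m\geq 0$ and $\lambda\geq 0$. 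The only point needing care is justifying that the weighted test function $v$ again lies in $W(s,t,V,V')$ so that \eqref{eq:ip} applies and $\langle\dot v(\tau),v(\tau)\rangle$ is defined a.e.; everything else is the standard Lions-type energy computation, with the exponential weight inserted solely to make the $\gamma$-terms cancel.
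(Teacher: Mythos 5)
Your proposal is correct and follows essentially the same route as the paper: the substitution $v(\tau)=e^{\gamma(t-\tau)}u(\tau)$ is just the paper's weight $e^{-\gamma\tau}u(\tau)$ with the final multiplication by $e^{2\gamma t}$ absorbed from the start, and both arguments then combine the integration-by-parts formula \eqref{eq:ip}, the coercivity estimate with the $\gamma$-cancellation, and Young's inequality with constants $\alpha/4$ and $1/\alpha$ in the same way. Your explicit attention to why $v$ stays in $W(s,t,V,V')$ is a welcome addition the paper leaves implicit.
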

\begin{proof}
  As $e^{-\gamma t}u(t)$ solves \eqref{eq:ivbp} with $\mathcal A$
  replaced by $\mathcal A+\gamma$ and $f(t)$ replaced by $e^{-\gamma
    t}f(t)$ we conclude from the definition of a weak solution and
  \eqref{eq:ip} that
  \begin{displaymath}
    \begin{split}
      \frac{\alpha}{2}\int_s^t\|e^{-\gamma\tau} &u(\tau)\|_V^2\,d\tau
      +\lambda\int_s^t\langle m(\tau)u(\tau),u(\tau)\rangle
      e^{-2\gamma\tau}\,d\tau\\
      &\leq \int_s^t
      a(\tau,e^{-\gamma\tau}u(\tau),e^{-\gamma\tau}u(\tau))\,d\tau
      +\lambda\int_s^t\langle e^{-\gamma\tau}m(\tau)u(\tau),
      e^{-\gamma\tau}u(\tau)\rangle \,d\tau\\
      &=-\frac{1}{2}\|u(t)e^{\gamma t}\|_2^2
      +\frac{1}{2}\|u(s)e^{-\gamma s}\|_2^2
      +\int_s^t\langle e^{-\gamma\tau}f(\tau),e^{-\gamma\tau}u(\tau)\rangle\,d\tau.
    \end{split}
  \end{displaymath}
  By an elementary inequality
  \begin{displaymath}
    \int_s^t\langle e^{-\gamma\tau}f(\tau),e^{-\gamma\tau}u(\tau)\rangle\,d\tau
    \leq\frac{\alpha}{4}\int_s^t\|e^{-\gamma\tau}u(\tau)\|_V^2\,d\tau
    +\frac{1}{\alpha}\int_s^t e^{-2\gamma\tau}\|f(\tau)\|_{V'}^2\,d\tau.
  \end{displaymath}
  Putting everything together and multiplying the inequality by
  $e^{2\gamma t}$ we get the required estimate.
\end{proof}
Using the above estimate we can get a compactness result.
\begin{theorem}
  \label{thm:compact}
  Suppose that $(f_n)$ is a bounded sequence in $L_2((0,T),V')$ and that
  $\lambda_n\to\infty$.  Also assume $u_{0n}$ is bounded in
  $L^2(\Omega)$. Let $u_n$ be the solution of \eqref{eq:ivbp} with
  $\lambda$ replaced by $\lambda_n$, $f$ replaced by $f_n$ and $u_0$
  replaced by $u_{0n}$.  For $\varepsilon>0$ let
  \begin{displaymath}
    S_\varepsilon
    :=\{(x,t)\in\Omega\times(0,T)\colon m(x,t)\geq\varepsilon\}
    \subseteq\Omega\times(0,T)
  \end{displaymath}
  Then the following assertions are true.
  \begin{enumerate}[\upshape (i)]
  \item $(u_n)$ is bounded in $L^\infty\bigl((0,T),L^2(\Omega)\bigr)$
    and in $L^2\bigl((0,T),V\bigr)$.
  \item $u_n\to 0$ in $L^2(S_\varepsilon)$ for all $\varepsilon>0$.
  \item There exists a subsequence $(n_k)$ such that
    $u_{n_k}\rightharpoonup u$ weakly in $L^2\bigl((0,T),V\bigr)$,
    $u_{0{n_k}}\rightharpoonup u_0$ weakly in $L^2(\Omega)$ and
    $f_{n_k}\rightharpoonup f$ weakly in
    $L^2\bigl((0,T),V'\bigr)$. Moreover,
    \begin{multline}
      \label{eq:weaksol-limit}
      -\int_0^T\langle\dot v(t),u(t)\rangle\,dt-\langle u_0,v(0)\rangle
      +\int_0^Ta(t,u(t),v(t))\,dt\\
      =\int_0^T\langle f(t),v(t)\rangle\,dt
    \end{multline}
    for all $v\in W(0,T,V,V')$ with $v=0$ on
    \begin{equation}
      \label{eq:S_0}
      S_0:=\bigl\{(x,t)\in\Omega\times(0,T)\colon m(x,t)>0\bigr\}
      =\bigcap_{\varepsilon>0}S_\varepsilon.
    \end{equation}
  \end{enumerate}
\end{theorem}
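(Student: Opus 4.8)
The plan is to use the a priori estimate from Proposition~\ref{prop:apriori} to extract the necessary boundedness and compactness, then pass to the limit in the weak formulation.

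\medskip

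\textbf{Step 1: Uniform bounds (part (i)).} First I would apply Proposition~\ref{prop:apriori} with $s=0$ to each $u_n$, using that $(f_n)$ is bounded in $L^2((0,T),V')$ and $(u_{0n})$ is bounded in $L^2(\Omega)$. Since $\lambda_n\geq 0$ (for $n$ large) and $m\geq 0$, the term $\lambda_n\int_0^t\langle m(\tau)u_n(\tau),u_n(\tau)\rangle e^{2\gamma(t-\tau)}\,d\tau$ is non-negative and may be dropped from the left-hand side; the right-hand side is bounded uniformly in $n$ and $t\in[0,T]$. This immediately gives a uniform bound on $\|u_n(t)\|_2$ for all $t$, hence boundedness in $L^\infty((0,T),L^2(\Omega))$, and (keeping the $V$-norm term, noting $e^{\gamma(t-\tau)}\geq 1$) a uniform bound on $\int_0^T\|u_n(\tau)\|_V^2\,d\tau$, i.e.\ boundedness in $L^2((0,T),V)$.

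\medskip

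\textbf{Step 2: Vanishing on $S_\varepsilon$ (part (ii)).} Keeping the potential term in the estimate of Proposition~\ref{prop:apriori} with $s=0$, $t=T$, I get
\begin{displaymath}
  \lambda_n\int_0^T\langle m(\tau)u_n(\tau),u_n(\tau)\rangle\,d\tau
  \leq \lambda_n\int_0^T\langle m(\tau)u_n(\tau),u_n(\tau)\rangle
  e^{2\gamma(T-\tau)}\,d\tau\leq C
\end{displaymath}
for a constant $C$ independent of $n$. Since $m\geq\varepsilon$ on $S_\varepsilon$,
\begin{displaymath}
  \varepsilon\lambda_n\int_{S_\varepsilon}|u_n|^2\,d(x,t)
  \leq\lambda_n\int_0^T\int_\Omega m|u_n|^2\,dx\,dt\leq C,
\end{displaymath}
so $\int_{S_\varepsilon}|u_n|^2\,d(x,t)\leq C/(\varepsilon\lambda_n)\to 0$ as $n\to\infty$. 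This is exactly $u_n\to 0$ in $L^2(S_\varepsilon)$.

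\medskip

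\textbf{Step 3: Extraction of weak limits and passage to the limit (part (iii)).} By Step 1 and reflexivity, pass to a subsequence (not relabelled at first, but indexed $n_k$) with $u_{n_k}\rightharpoonup u$ in $L^2((0,T),V)$, $u_{0n_k}\rightharpoonup u_0$ in $L^2(\Omega)$, and $f_{n_k}\rightharpoonup f$ in $L^2((0,T),V')$. Now fix a test function $v\in W(0,T,V,V')$ with $v(T)=0$ and $v=0$ on $S_0$; since $S_0=\bigcup_\varepsilon S_\varepsilon$, the support of $v$ (in the relevant sense) meets $S_\varepsilon$ only for... actually $v$ vanishes on all of $S_0\supseteq S_\varepsilon$. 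Writing the weak formulation \eqref{eq:weaksol} for $u_{n_k}$, every term except the $\lambda_n$-term is linear and continuous in $(u_{n_k},u_{0n_k},f_{n_k})$ with respect to the weak topologies (the form term $\int_0^T a(t,u_{n_k},v)\,dt$ defines a bounded linear functional of $u_{n_k}\in L^2((0,T),V)$ for fixed $v$), so these converge to the corresponding terms with $u$, $u_0$, $f$. For the remaining term, since $v=0$ on $S_0$ and $m=0$ off $S_0$, we have $\langle m(t)u_{n_k}(t),v(t)\rangle=\int_\Omega m\,u_{n_k}\,v\,dx$ with the integrand supported where $m>0$, i.e.\ on $S_0$, where $v$ vanishes --- hence this term is identically zero for every $n_k$. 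Thus the limit identity \eqref{eq:weaksol-limit} holds for all such $v$, and by the density of $\{v\in W(0,T,V,V')\colon v(T)=0\}$-type arguments mentioned before Proposition~\ref{prop:apriori}, extended to those vanishing on $S_0$, it holds for all $v\in W(0,T,V,V')$ with $v=0$ on $S_0$ (the condition $v(T)=0$ can be absorbed since \eqref{eq:weaksol-limit} as written has no boundary term at $T$, matching test functions with $v(T)=0$).

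\medskip

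\textbf{Main obstacle.} The routine parts are Steps 1 and 2. The delicate point in Step 3 is handling the test functions: one must be careful that $v=0$ on $S_0$ is exactly the condition that kills the potential term uniformly in $n$ (it is the \emph{a.e.} vanishing on the open-in-measure set where $m>0$ that matters, and $m$ is only $L^\infty$, not continuous), and one should check that the class of such $v$ is rich enough --- that no boundary term at $t=T$ survives and that the integration-by-parts identity \eqref{eq:ip} applies to the limit $u$, which a priori only lies in $L^2((0,T),V)$ without known control on $\dot u$. In fact the cleanest route is to avoid ever integrating by parts on $u_{n_k}$: keep the weak form \eqref{eq:weaksol} in the distributional form that only uses $\dot v$, for which $u_{n_k}\in L^2((0,T),V)$ suffices, and simply pass to the weak limit term by term. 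That is the approach I would follow.
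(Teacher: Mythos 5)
Your proposal is correct and follows essentially the same route as the paper: parts (i) and (ii) are read off from Proposition~\ref{prop:apriori} exactly as in the text, and part (iii) is obtained by weak compactness together with the observation that the potential term vanishes identically for test functions $v$ with $v=0$ on $S_0$, so one can pass to the limit term by term in the distributional weak formulation involving only $\dot v$. Your parenthetical remark that $S_0=\bigcup_{\varepsilon>0}S_\varepsilon$ (rather than the intersection written in \eqref{eq:S_0}) is in fact the correct reading.
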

\begin{proof}
  (i) By Proposition ~\ref{prop:apriori},
  \begin{equation}
    \label{eq:u_n-bounded}
    \|u_n(t)\|_2^2
    +\frac{\alpha}{4}\int_0^te^{\gamma(t-\tau)}\|u_n(\tau)\|_V^2\,d\tau
    \leq e^{2\gamma T}\|u_{0n}\|_2^2+\frac{2e^{2\gamma T}}{\alpha}
    \|f_n\|_{L^2((0,T),V')}
  \end{equation}
  for all $0\le t\le T$, which remains uniformly bounded as with respect
  to $t\in [0,T]$ as $n\to\infty$. Hence, $(u_n)$ is bounded in
  $L^\infty\bigl((0,T),L^2(\Omega)\bigr)$. Since $\gamma\geq 0$ we have
  $e^{\gamma(T-\tau)}\geq 1$ for all $\tau\in[0,T]$. Setting $t=T$ it
  therefore follows from \eqref{eq:u_n-bounded} that $(u_n)$ is bounded
  in $L^2\bigl((0,T),V\bigr)$.

  (ii) By Proposition~\ref{prop:apriori} the sequence
  \begin{displaymath}
    \lambda_n
    \int_0^T\langle m(\tau)u_n(\tau),u_n(\tau)\rangle\,d\tau
  \end{displaymath}
  remains bounded as $n\to\infty$. As $\lambda_n\to\infty$ we
  conclude that
  \begin{displaymath}
    \|u_{n_k}\|_{L_2(U_\varepsilon)}
    \leq\frac{1}{\varepsilon}
    \int_0^T\langle m(\tau)u_{n_k}(\tau),u_{n_k}(\tau)\rangle\,d\tau
    \to 0
  \end{displaymath}
  as $k\to\infty$.

  (iii) By (i) the sequence $(u_n)$ is bounded in
  $L^2\bigl((0,T),V\bigr)$. By assumption, $u_{0n}$ is bounded in
  $L^2(\Omega)$ and $(f_n)$ is bounded in
  $L^2\bigl((0,T),V'\bigr)$. Since all spaces are Hilbert spaces, we can
  select a subsequence such that $u_{n_k}\rightharpoonup u$ weakly in
  $L^2\bigl((0,T),V\bigr)$, $u_{0{n_k}}\rightharpoonup u_0$ weakly in
  $L^2(\Omega)$ and $f_{n_k}\rightharpoonup f$ weakly in
  $L^2\bigl((0,T),V'\bigr)$. If $v\in W(0,T,V,V')$ with $v=0$ on $S_0$,
  then 
  \begin{displaymath}
    \int_0^T\langle mu_{n_k}(\tau),v(\tau)\rangle\,d\tau=0
  \end{displaymath}
  for all $k\in\mathbb N$ and thus \eqref{eq:weaksol} reduces to
  \begin{displaymath}
    -\int_0^T\langle \dot v(t),u_{n_k}(t)\rangle\,dt
    -\langle u_{0n_k},v(s)\rangle+\int_s^Ta(t,u_{n_k}(t),v(t))\,dt
    =\int_s^T\langle f_{n_k}(t),v(t)\rangle\,dt
  \end{displaymath}
  for all $k\in\mathbb N$. Now \eqref{eq:weaksol-limit} follows by
  letting $k\to\infty$.
\end{proof}
In the above theorem we make only minimal assumptions on the weight
functions $m$. In particular, we do not require that the set $S_0$ given
by \eqref{eq:S_0} is an open set, nor that it has any regularity. We can
say something more about the limit problem if we make some stronger
assumptions. We assume that $\supp(m)$ is topologically regular in the
sense that
\begin{equation}
  \label{eq:m-top-reg}
  \supp(m)=\overline{\interior(\supp(m))}.
\end{equation}
We furthermore define the possibly non-cylindrical set
\begin{equation}
  \label{eq:D_m}
  D_m:=(\Omega\times[0,T])\setminus\supp(m)
\end{equation}
and for $0\leq t\leq T$
\begin{equation}
  \label{eq:Omega_t}
  \Omega_t:=\{x\in\Omega\colon(x,t)\in D_m\}.
\end{equation}
Intuitively, the limit problem satisfies Dirichlet boundary conditions
on $\partial D_m\cap (\Omega\times(0,T])$ because the solution of the
limit problem is forced to be zero outside $D_m$.  For this to be really
true we need to assume that $\Omega_t$ is stable in the sense of Keldysh
\cite{keldysh:41:ssd} for all $t\in[0,T]$. This means that
\begin{equation}
  \label{eq:stability}
  H_0^1(\Omega_t)
  =\bigl\{w\in H_0^1(\Omega)\colon\text{$w=0$ a.e. on
    $\Omega\setminus\Omega_t$}\};
\end{equation}
see the discussions in
\cite{arendt:08:vds,cano:01:cdp,hedberg:93:ahf,lopez:96:mpe,stummel:74:pts}.
We have the following corollary, where as usual all boundary conditions
are satisfied in the weak sense.
\begin{corollary}
  \label{cor:limit-problem}
  Suppose that the assumptions of Theorem~\ref{thm:compact} are
  satisfied, that $\supp(m)$ satisfies \eqref{eq:m-top-reg} and that
  $D_m\neq\emptyset$. Let $u$ be the limit of $(u_{n_k})$ as in
  Theorem~\ref{thm:compact}{\upshape (iii)}.

  {\upshape (i)} Then $u$ is a (local) weak solution of the parabolic
  limit problem
    \begin{equation}
    \label{eq:ivbp-limit}
      \begin{aligned}
      \frac{\partial u}{\partial t}+\mathcal A(t)u
      &=f(x,t)
      &&\text{in $D_m$,}\\
      \mathcal B(t)u&=0
      &&\text{on $\partial D_m\cap (\partial\Omega\times(0,T))$}\\
      u(x,0)&=u_0(x)&&\text{on $\Omega_0$.}
      \end{aligned}
    \end{equation}

    {\upshape (ii)} Suppose that $\Omega_t$ is stable in the sense of
    \eqref{eq:stability} for all $t\in [0,T]$.  Then the solution $u$ of
    \eqref{eq:ivbp-limit} satisfies Dirichlet boundary conditions $u=0$
    on $\partial\Omega_t\cap\Omega$ for almost all $t\in(0,T]$.
\end{corollary}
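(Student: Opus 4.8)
The plan is to read everything off the weak identity \eqref{eq:weaksol-limit} established in Theorem~\ref{thm:compact}(iii). Since $m\ge 0$ we have $\supp(m)=\overline{S_0}$, so $D_m$ is relatively open in $\Omega\times[0,T]$ and each section $\Omega_t$ is open in $\Omega$. The key point is that any $v\in W(0,T,V,V')$ with $\supp v\cap\supp(m)=\emptyset$ vanishes on $S_0$ and is therefore admissible in \eqref{eq:weaksol-limit}; these are exactly the test functions natural for the limit problem. For part~(i) I would use this in two steps. Taking $\varphi\in C_c^\infty$ with $\supp\varphi$ a compact subset of the Euclidean interior of $D_m$ (so $\varphi$ also vanishes near $t=0$ and $t=T$), \eqref{eq:weaksol-limit} reduces to $-\int_0^T\langle\dot\varphi,u\rangle\,dt+\int_0^T a(t,u,\varphi)\,dt=\int_0^T\langle f,\varphi\rangle\,dt$, i.e.\ $\partial_t u+\mathcal A(t)u=f$ in the interior of $D_m$. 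Admitting then $V$-valued test functions $v$ with $v(\cdot,0)=v(\cdot,T)=0$ and $\supp v\cap\supp(m)=\emptyset$ but not vanishing on $\partial\Omega$, the boundary integral contained in $a(t,\cdot,\cdot)$ forces $\mathcal B(t)u=0$ on $\partial D_m\cap(\partial\Omega\times(0,T))$; for Dirichlet data this is already contained in $u(t)\in H_0^1(\Omega)$.

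The remaining assertion of~(i), the initial condition, requires a local regularisation. Fix a compact $K\subseteq\Omega_0$. Since $K\times\{0\}\subseteq D_m$ and $D_m$ is relatively open, one can choose $\eta\in C^\infty(\Omega\times[0,T])$ with $\eta\equiv 1$ on $K\times[0,\delta]$, $\eta(\cdot,T)=0$ and $\supp\eta\cap\supp(m)=\emptyset$. Then $m\eta\equiv 0$, so \eqref{eq:weaksolalt} gives $\partial_t(\eta u_n)=\dot\eta\,u_n+\eta\bigl(f_n-A(t)u_n\bigr)$, which by Theorem~\ref{thm:compact}(i) is bounded in $L^2((0,T),V')$; with the $L^2((0,T),V)$-bound this makes $(\eta u_n)$ bounded in $W(0,T,V,V')$. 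After passing to a further subsequence $\eta u_n\rightharpoonup\eta u$ in $W(0,T,V,V')$, and since evaluation at $t=0$ is continuous on $W(0,T,V,V')$ we get $(\eta u)(0)=\lim_n\eta(\cdot,0)u_{0n}=u_0$ on $K$. Exhausting $\Omega_0$ by such $K$ shows that, locally near $\Omega_0\times\{0\}$, $u$ agrees with a function in $W$ whose trace at $t=0$ is $u_0$; together with the previous paragraph this is the meaning of ``$u$ is a local weak solution of \eqref{eq:ivbp-limit}''.

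For part~(ii) I would first note that $u_{n_k}\rightharpoonup u$ in $L^2((0,T),V)$, hence in $L^2(S_\varepsilon)$, so weak lower semicontinuity of the norm together with Theorem~\ref{thm:compact}(ii) yields $u=0$ a.e.\ on every $S_\varepsilon$, hence $u=0$ a.e.\ on $S_0$ and, by Fubini, $u(t)=0$ a.e.\ on $\{x:m(x,t)>0\}$ for a.e.\ $t$. The crux is then to upgrade this to $u(t)=0$ a.e.\ on $\Omega\setminus\Omega_t=\{x:(x,t)\in\supp(m)\}$ for a.e.\ $t$. Here both the topological regularity \eqref{eq:m-top-reg} and the spatial $H^1$-regularity of $u(t)$ are used: since $\{m>0\}$ is dense in $\supp(m)=\overline{\interior(\supp m)}$, its one-dimensional sections are, for almost every line and up to null sets, dense in the sections of $\supp(m)$, and because $u(t)\in V$ is absolutely continuous on almost every line, its restriction to such a line is continuous and vanishes on a dense set, hence on the closure; a Fubini argument over the coordinate directions then gives $u(t)=0$ a.e.\ on $\Omega\setminus\Omega_t$. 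Finally, the stability condition \eqref{eq:stability} identifies $\{w\in V:w=0\text{ a.e.\ on }\Omega\setminus\Omega_t\}$ with $H_0^1(\Omega_t)$, so $u(t)|_{\Omega_t}\in H_0^1(\Omega_t)$, which is the weak form of $u=0$ on $\partial\Omega_t\cap\Omega$ for a.e.\ $t$.

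The step I expect to be the main obstacle is exactly this upgrade in part~(ii): converting the pointwise-a.e.\ vanishing of $u$ on $\{m>0\}$ into vanishing of each section $u(t)$ on all of $\Omega\setminus\Omega_t$. The difficulty is that $\supp(m)\setminus\{m>0\}$, and even $\supp(m)\setminus\interior(\supp m)$, may a priori carry positive Lebesgue measure, so no soft density argument suffices; one genuinely has to combine \eqref{eq:m-top-reg}, a careful slicing argument, and the line-absolute-continuity of $H^1$-functions. A secondary and more routine point is verifying in the initial-condition step of part~(i) that the cut-off $\eta$ can be chosen smooth and supported in $D_m$ down to $t=0$, which is precisely what relative openness of $D_m$ (and $D_m\neq\emptyset$) guarantees.
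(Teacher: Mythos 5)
Your proposal follows essentially the same route as the paper: part (i) is read off from the limit identity \eqref{eq:weaksol-limit} by restricting to test functions vanishing on $S_0$ (the paper simply declares \eqref{eq:weaksol-limit} to \emph{be} the weak formulation of \eqref{eq:ivbp-limit}, so your additional cut-off argument for the initial trace, while correct, is more than is needed), and part (ii) is the same two-step argument: show that $u$ vanishes on $\supp(m)$, then invoke the Keldysh stability \eqref{eq:stability}, used locally, to conclude $u(t)\in H_0^1(\Omega_t)$ away from $\partial\Omega$. You have correctly isolated the one genuinely delicate point, which the paper compresses into a single sentence, namely upgrading ``$u=0$ a.e.\ on $S_0=\{m>0\}$'' (which follows, as you say, from Theorem~\ref{thm:compact}(ii) and weak convergence) to ``$u(t)=0$ a.e.\ on $\Omega\setminus\Omega_t$''. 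Be aware, however, that your slicing argument is not complete as sketched: topological density of $\{m>0\}$ in $\supp(m)$ does not imply that its one-dimensional sections are dense in the corresponding sections of $\supp(m)$ for almost every line in a given direction. For instance, if $\supp(m)\setminus\{m>0\}$ contains a product $C\times[0,1]$ with $C$ a fat Cantor set, then every line through $C$ in the second coordinate direction meets $\{m>0\}$ in a set that is not dense in the section of $\supp(m)$, so for those lines the absolute-continuity argument gives nothing; one must either choose the slicing direction adapted to the geometry or replace line sections by a capacity/quasi-continuity argument. Since the paper itself offers no more detail than the bare assertion that $u=0$ on $\supp(m)$, this is a gap in exposition shared by both arguments rather than a divergence from the paper's method; everything else in your write-up (admissibility of test functions supported in $D_m$, recovery of the Robin condition from the boundary integral in $a(t,\cdot,\cdot)$, and the locality of stability) matches the intended proof.
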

\begin{proof}
  (i) Note that \eqref{eq:weaksol-limit} is equivalent to the statement
  that $u$ is a weak solution of \eqref{eq:ivbp-limit}.

  (iii) It follows from Theorem~\ref{thm:compact}(ii) and the regularity
  assumption \eqref{eq:m-top-reg} that $u=0$ on $\supp(m)$. As $u\in
  L^2\bigl((0,T),V\bigr)$ we have that $u(t)\in V\subseteq H^1(\Omega)$
  for almost all $t\in (0,T)$ with $u(t)=0$ on
  $\Omega\setminus\Omega_t$. Since stability is a local property of the
  boundary of $\Omega_t$ it follows that $u(t)\in H_0^1(\Omega_t\cap U)$
  for every open set $U$ with $U\cap\partial\Omega=\emptyset$. Hence $u$
  satisfies Dirichlet boundary conditions in the weak sense on
  $\partial\Omega_t\cap\Omega$.
\end{proof}

\section[The evolution operator and Lp-theory]%
{The evolution operator and $L^p$-theory}
\label{sec:Lp-solutions}
It is shown in \cite{daners:00:hke} that the evolution operator
$U_\lambda(t,s)$, $0\leq s\leq t\leq T$ defined in
\eqref{eq:evolution-operator} is acting on $L^p(\Omega)$ for $1\leq
p\leq\infty$. A key fact we establish is that $U_\lambda(t,s)$ has a
kernel satisfying Gaussian estmates uniformly with respect to
$\lambda\geq 0$.
\begin{theorem}
  \label{thm:unif-est}
  Let $m\in L^\infty(\Omega\times (0,\infty))$ be non-negative.  For all
  $1\leq p\leq q\leq\infty$ the evolution operator
  $U_\lambda(t,s)\in\mathcal L(L^p(\Omega),L^q(\Omega))$ is a compact
  positive irreducible operator having a kernel $k_\lambda(x,y,t,s)$
  satisfying a Gaussian estimate. More precisely, there exist constants
  $M\geq 1$, $\omega\in\mathbb R$ and $c>0$ such that
  \begin{equation}
    \label{eq:uhkest}
    0<k_{\lambda_2}(x,y,t,s)\leq k_{\lambda_1}(x,y,t,s)
    \leq Me^{\omega(t-s)}(t-s)^{-N/2}e^{-c\frac{|x-y|^2}{t-s}}
  \end{equation}
  for all $0\leq\lambda_1\leq\lambda_2<\infty$, $0\leq s<t\leq T$ and
  $x,y\in\Omega$. Moreover,
  \begin{equation}
    \label{eq:hkest}
    \|U_\lambda(t,s)\|_{\mathcal L(L^p,L^q)}
    \leq Mt^{-\frac{N}{2}(\frac{1}{p}-\frac{1}{q})}e^{\omega(t-s)}
  \end{equation}  
  If $u_0\geq 0$, then $U_\lambda(t,s)u_0$ is decreasing as
  $\lambda\to\infty$.
\end{theorem}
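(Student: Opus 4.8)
The plan is to reduce all the assertions to the already-known case $\lambda=0$, which is covered by \cite{daners:00:hke}, and then to propagate the estimate to arbitrary $\lambda\ge 0$ by a monotonicity argument in $\lambda$ based on the parabolic maximum principle. The only genuinely new ingredient is the comparison $k_{\lambda_2}\le k_{\lambda_1}$; everything else is a matter of quoting \cite{daners:00:hke} for the operator $\mathcal A(t)+\lambda m(t)$ and assembling constants.

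First I would observe that, for each fixed $\lambda\ge 0$, the operator $\mathcal A(t)+\lambda m(t)$ is of exactly the same structural type as $\mathcal A(t)$: it is uniformly strongly elliptic with the \emph{same} ellipticity constant $\alpha$, its coefficients are bounded and measurable, and because $\lambda m\ge 0$ the G\aa rding inequality $\tfrac{\alpha}{2}\|u\|_V^2\le a(t,u,u)+\lambda\langle m(t)u,u\rangle+\gamma\|u\|_2^2$ holds with the same $\gamma$ as in \eqref{eq:gamma}. Hence \cite{daners:00:hke} applies to $\mathcal A(t)+\lambda m(t)$ and gives: $U_\lambda(t,s)\in\mathcal L(L^p(\Omega),L^q(\Omega))$ for $1\le p\le q\le\infty$, with the kernel representation $U_\lambda(t,s)u_0(x)=\int_\Omega k_\lambda(x,y,t,s)u_0(y)\,dy$, the kernel $k_\lambda$ jointly continuous and strictly positive for $t>s$, a Gaussian upper estimate for $k_\lambda$ (with constants that at this stage may depend on $\lambda$), positivity of $U_\lambda(t,s)$ from the weak parabolic maximum principle and irreducibility from the strong one (both unaffected by adding the non-negative zeroth-order term $\lambda m$), and compactness — on $L^2$ the kernel is bounded on the finite-measure set $\Omega\times\Omega$ for $t>s$, so $U_\lambda(t,s)$ is Hilbert--Schmidt, and compactness $L^p\to L^q$ then follows by factorising $U_\lambda(t,s)=U_\lambda(t,\tfrac{t+s}{2})U_\lambda(\tfrac{t+s}{2},s)$ and using the smoothing in \eqref{eq:hkest}.

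Next comes the main step, the monotonicity $k_{\lambda_2}\le k_{\lambda_1}$ for $0\le\lambda_1\le\lambda_2$. Fix $0\le u_0\in L^2(\Omega)$ and set $u_i:=U_{\lambda_i}(\cdot,s)u_0$ for $i=1,2$. By positivity $u_2\ge 0$, and $w:=u_1-u_2$ satisfies, in the weak sense of \eqref{eq:weaksolalt},
\[
  \dot w + A(t)w + \lambda_1 m(t)\,w = (\lambda_2-\lambda_1)\,m(t)\,u_2 \ge 0 ,
  \qquad w(s)=0 ,
\]
so that $w(t)=\int_s^t U_{\lambda_1}(t,\tau)\,(\lambda_2-\lambda_1)\,m(\tau)u_2(\tau)\,d\tau\ge 0$ by positivity of $U_{\lambda_1}$; here $m u_2\in L^2((s,T),V')$ since $m\in L^\infty$ and $u_2\in L^2((s,T),V)$, so the variation-of-constants representation is legitimate. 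Thus $U_{\lambda_1}(t,s)u_0\ge U_{\lambda_2}(t,s)u_0$ for every $0\le u_0\in L^2(\Omega)$ and every $0\le s<t\le T$, which immediately gives the last assertion of the theorem (monotone decrease in $\lambda$ for $u_0\ge 0$). Inserting the kernel representation, $\int_\Omega\bigl(k_{\lambda_1}-k_{\lambda_2}\bigr)(x,y,t,s)u_0(y)\,dy\ge 0$ for all $u_0\ge 0$ forces $k_{\lambda_1}(x,y,t,s)\ge k_{\lambda_2}(x,y,t,s)$ for almost every $(x,y)$, and then for all $x,y$ by joint continuity of the kernels.

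Finally I would assemble the estimates. Let $M\ge 1$, $\omega\in\mathbb R$, $c>0$ be the constants provided by \cite{daners:00:hke} \emph{for the case $\lambda=0$}, so that $0<k_0(x,y,t,s)\le Me^{\omega(t-s)}(t-s)^{-N/2}e^{-c|x-y|^2/(t-s)}$. Combining this with the monotonicity just established, $0<k_{\lambda_2}\le k_{\lambda_1}\le k_0$ yields \eqref{eq:uhkest} with $M,\omega,c$ independent of $\lambda$. The smoothing bound \eqref{eq:hkest} then follows from \eqref{eq:uhkest} by a standard convolution (Young's inequality) argument, valid also in the endpoint cases: with $1+\tfrac1q=\tfrac1p+\tfrac1r$ one has $\|U_\lambda(t,s)\|_{\mathcal L(L^p,L^q)}\le Me^{\omega(t-s)}\bigl\|(t-s)^{-N/2}e^{-c|\cdot|^2/(t-s)}\bigr\|_{L^r(\mathbb R^N)}$, and the last norm equals a dimensional constant times $(t-s)^{-\frac N2(\frac1p-\frac1q)}$; absorbing the constant into $M$ gives \eqref{eq:hkest}. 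I expect the only slightly delicate points to be purely of bookkeeping nature — checking that adding $\lambda m$ leaves the hypotheses of \cite{daners:00:hke} intact with $\lambda$-independent structural constants, and justifying the weak-form comparison for $w$ — rather than anything requiring new ideas.
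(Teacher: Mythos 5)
Your proposal is correct and follows essentially the same route as the paper: quote \cite{daners:00:hke} for the $\lambda$-fixed kernel and Gaussian bound, establish $U_{\lambda_2}(t,s)u_0\le U_{\lambda_1}(t,s)u_0$ for $u_0\ge 0$ via the variation-of-constants formula with right-hand side $\pm(\lambda_2-\lambda_1)mu_2$ and positivity of $U_{\lambda_1}$, pass to the kernels, and then take all constants from the case $\lambda=0$. The only cosmetic difference is that you derive \eqref{eq:hkest} directly by Young's inequality where the paper cites \cite[Corollary~7.2]{daners:00:hke}, and you spell out in more detail why the structural constants are $\lambda$-independent; the substance is identical.
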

\begin{proof}
  From \cite[Section~6 and~8]{daners:00:hke} the evolution operator
  $U_\lambda(t,s)$ is a positive operator on $L^p(\Omega)$ with kernel
  $k_\lambda(x,y,t,s)$ satisfying a Gaussian estimate. Assume now that
  $\lambda_1\leq\lambda_2$ and that $u_0\in L^p(\Omega)$ is
  non-negative.  For $i=1,2$ set $u_i:=U_{\lambda_i}(\cdot\,,s)u_0$.  We
  want to show that $u_2\leq u_1$. Clearly $u_2$ is the solution of
  \begin{displaymath}
    \begin{aligned}
      \frac{\partial u_2}{\partial t}+\mathcal A(t)u_2+\lambda_1 mu_2
      &=-(\lambda_2-\lambda_1)mu_2
      &&\text{in $\Omega\times(s,T)$,}\\
      \mathcal B(t)u&=0
      &&\text{in $\partial\Omega\times(s,T)$,}\\
      u(x,s)&=u_0&&\text{in $\Omega$,}
    \end{aligned}
  \end{displaymath}
  and therefore by the variation of constants formula for variational
  evolution equations (see for instance \cite[Section~4]{daners:96:dpl})
  \begin{displaymath}
    0\leq u_2(t)
    =u_1(t)-(\lambda_2-\lambda_1)
    \int_s^tU_{\lambda_1}(t,\tau)m(\tau)u_2(\tau)\,d\tau.
  \end{displaymath}
  for all $s\in[0,T)$ and all $t\in[s,T]$.  Here we used that
  $u_1(t)=U_{\lambda_1}(t,s)u_0$. We already know that $u_1,u_2\geq 0$.
  As $m\geq 0$ and $\lambda_2-\lambda_1\geq 0$ and $U(t,\tau)$ is a
  positive operator it follows that $u_2\leq u_1$. In particular,
  $U_\lambda(t,s)u_0$ is decreasing in $\lambda$ if $u_0\geq 0$. In
  terms of the heat kernels the above writes
  \begin{displaymath}
    \int_\Omega k_{\lambda_2}(x,y,t,s)u_0(y)\,dy=u_2(x,t)
    \leq u_1(x,t)=\int_\Omega k_{\lambda_1}(x,y,t,s)u_0(y)\,dy
  \end{displaymath}
  for all non-negative $u_0\in L^p(\Omega)$. Hence \eqref{eq:uhkest}
  follows from \cite[Theorem~7.1]{daners:00:hke} by taking the estimate
  of the kernel for $\lambda=0$. As $|U_\lambda(t,s)u_0| \leq
  U_\lambda(t,s)|u_0|$ we get
  \begin{displaymath}
    \|U_\lambda(t,s)\|_{\mathcal L(L^p,L^q)}
    \leq\|U_0(t,s)\|_{\mathcal L(L^p,L^q)}
  \end{displaymath}
  for all $\lambda\geq 0$. Now \eqref{eq:hkest}
  follows from \cite[Corollary~7.2]{daners:00:hke}.
\end{proof}
We next look at convergence and compactness properties of the evolution
operator.
\begin{theorem}
  \label{thm:U-limit}
  Under the assumptions of Theorem~\ref{thm:unif-est}, for every $0\leq
  s<t\leq T$ and $1<p\leq q<\infty$
  \begin{equation}
    \label{eq:U-limit}
    U_\infty(t,s)
    :=\lim_{\lambda\to\infty}U_\lambda(t,s)
  \end{equation}
  exist in $\mathcal L\bigl(L^p(\Omega),L^q(\Omega)\bigr)$. Moreover,
  $U_\infty(t,s)$ is a positive compact operator on $L^p(\Omega)$ with
  kernel $k_\infty(x,y,t,s)$ satisfying a Gaussian estimate, and
  \begin{equation}
    \label{eq:U-limit-property}
    U_\infty(t,s)=U_\infty(t,\tau)U_\infty(\tau,s)
  \end{equation}
  whenever $0\leq s<\tau<t\leq T$. Finally, the linear operator defined
  by
  \begin{displaymath}
    u_0\mapsto U_\lambda(\cdot\,,s)u_0
  \end{displaymath}
  converges to the corresponding operator in $\mathcal
  L\bigl(L^p(\Omega),L^r((s,T),L^q(\Omega))\bigr)$ as
  $\lambda\to\infty$ whenever $1<r<\infty$ and  $1<p\leq q<\infty$ are
  such that
  \begin{equation}
    \label{eq:pqr-cond}
    \frac{N}{2}\Bigl(\frac{1}{p}-\frac{1}{q}\Bigr)<\frac{1}{r}.
  \end{equation}
\end{theorem}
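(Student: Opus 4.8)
The plan is to prove Theorem~\ref{thm:U-limit} in three stages: first the norm convergence of $U_\lambda(t,s)$ in $\mathcal L(L^p,L^q)$, then the structural properties of the limit ($U_\infty$ positive, compact, with Gaussian kernel, and the evolution law), and finally the convergence in the mixed-norm space $L^r((s,T),L^q)$.

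For the first stage, fix $0\le s<t\le T$. By Theorem~\ref{thm:unif-est}, for non-negative $u_0$ the family $U_\lambda(t,s)u_0$ is pointwise decreasing in $\lambda$ and bounded below by $0$, so for each $x$ the limit $\lim_{\lambda\to\infty}(U_\lambda(t,s)u_0)(x)$ exists; by monotone convergence (applied to $u_0^\pm$ separately, then by linearity for general $u_0$) this gives a pointwise-defined limit operator $U_\infty(t,s)$ on $L^p(\Omega)$, and the uniform Gaussian bound \eqref{eq:uhkest} passes to the kernels $k_\lambda\downarrow k_\infty$, so $k_\infty$ satisfies the same Gaussian estimate and in particular $U_\infty(t,s)\in\mathcal L(L^p,L^q)$ with $\|U_\infty(t,s)\|_{\mathcal L(L^p,L^q)}\le Mt^{-\frac N2(1/p-1/q)}e^{\omega(t-s)}$. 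To upgrade pointwise/monotone convergence to \emph{norm} convergence in $\mathcal L(L^p,L^q)$ when $1<p\le q<\infty$, I would use the standard trick: write $U_\lambda(t,s)=U_\lambda(t,r)U_\lambda(r,s)$ for some intermediate $r\in(s,t)$ and exploit that $(U_\lambda(r,s))_\lambda$ is, by the Gaussian estimate, uniformly bounded from $L^p$ into a space that embeds compactly — e.g. estimate $U_\lambda(r,s)$ from $L^p$ into $L^{q'}$ for some $q<q'\le\infty$, combined with the Dunford--Pettis / Vitali argument that a monotone (hence order-bounded) sequence of positive kernel operators converging pointwise and dominated by a fixed Gaussian operator converges in operator norm between the reflexive $L^p$ spaces. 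Concretely: $|U_\lambda(t,s)u_0-U_\infty(t,s)u_0|\le (U_0(t,s)-U_\infty(t,s))|u_0|$ does not hold directly, but $0\le (U_\lambda(t,s)-U_\mu(t,s))|u_0|\le (U_\lambda(t,s)u_0^+ - U_\mu(t,s)u_0^+)+\cdots$; monotonicity in $\lambda$ together with the Gaussian kernel bound gives an $L^1$-type uniform integrability of the kernels $k_\lambda(\cdot,\cdot,t,s)$ over $\Omega\times\Omega$, and Vitali's theorem then yields $k_\lambda\to k_\infty$ in $L^1(\Omega\times\Omega)$ — whence, interpolating with the uniform $L^\infty$ Gaussian bound, convergence of the kernels in every $L^\rho(\Omega\times\Omega)$, which by Young/Schur-type estimates gives norm convergence of the operators in $\mathcal L(L^p,L^q)$ for the stated range.

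The second stage is then short. Compactness of $U_\infty(t,s)$ follows because it is a norm limit of the compact operators $U_\lambda(t,s)$ (Theorem~\ref{thm:unif-est}), and $\mathcal L(L^p,L^q)$ with $1<p\le q<\infty$ is a Banach space in which compact operators form a closed subspace; positivity is preserved under the (pointwise, hence norm) limit. For the evolution law \eqref{eq:U-limit-property}, start from $U_\lambda(t,s)=U_\lambda(t,\tau)U_\lambda(\tau,s)$, choose the intermediate exponents so that $U_\lambda(\tau,s)\colon L^p\to L^{q_1}$ and $U_\lambda(t,\tau)\colon L^{q_1}\to L^q$ each converge in operator norm along the first stage (possible since one can always insert a further intermediate time/exponent to meet the $\frac N2(1/p-1/q_1)$ and $\frac N2(1/q_1-1/q)$ gaps), and pass to the limit in the product using that operator multiplication is jointly continuous on bounded sets.

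For the final stage, I want to show $u_0\mapsto U_\lambda(\cdot\,,s)u_0$ converges in $\mathcal L(L^p,L^r((s,T),L^q))$ under \eqref{eq:pqr-cond}. The estimate \eqref{eq:hkest} gives $\|U_\lambda(t,s)u_0\|_q\le M(t-s)^{-\frac N2(1/p-1/q)}e^{\omega(t-s)}\|u_0\|_p$ uniformly in $\lambda$, and the exponent condition $\frac N2(1/p-1/q)<\frac1r$ is exactly what makes $t\mapsto (t-s)^{-\frac N2(1/p-1/q)}$ lie in $L^r(s,T)$; this provides a uniform (in $\lambda$) bound in the target space and, more importantly, an integrable dominating function $g(t):=MC(t-s)^{-\frac N2(1/p-1/q)}e^{\omega(t-s)}\|u_0\|_p$ for $\|U_\lambda(t,s)u_0 - U_\infty(t,s)u_0\|_q$. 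By stage one, for each fixed $t\in(s,T]$ we have $\|U_\lambda(t,s)u_0-U_\infty(t,s)u_0\|_q\to0$, so by dominated convergence (in the variable $t$, with dominating function $2g^r\in L^1(s,T)$) the $L^r((s,T),L^q)$-norm of the difference tends to $0$; taking the supremum over $\|u_0\|_p\le1$ — which is legitimate because the dominating function depends on $u_0$ only through the factor $\|u_0\|_p$, so one really estimates the operator norm throughout — yields convergence in $\mathcal L(L^p,L^r((s,T),L^q))$.

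The main obstacle I anticipate is the first stage: going from the essentially soft monotone/pointwise convergence (which is immediate from Theorem~\ref{thm:unif-est}) to genuine \emph{operator-norm} convergence in $\mathcal L(L^p,L^q)$. The clean way is via uniform integrability of the kernels and Vitali's convergence theorem, using the common Gaussian majorant as the dominating object; one must be slightly careful near the diagonal $x=y$ and near $t=s$, but the Gaussian bound $Me^{\omega(t-s)}(t-s)^{-N/2}e^{-c|x-y|^2/(t-s)}$ is integrable over $\Omega\times\Omega$ for fixed $t>s$ and controls everything. Once kernel convergence in $L^1(\Omega\times\Omega)$ is in hand, interpolation with the uniform $L^\infty$ bound and standard Schur/Young estimates deliver the operator-norm statement for all admissible $(p,q)$, and the remaining steps are routine.
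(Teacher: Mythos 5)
Your proposal is correct and follows essentially the same route as the paper: monotone pointwise convergence of the kernels under the uniform Gaussian majorant, dominated convergence to upgrade this to operator-norm convergence in $\mathcal L(L^p,L^q)$ (the paper applies dominated convergence directly to the mixed $L^q_x(L^{p'}_y)$ norm of $k_\lambda-k_\infty$ via H\"older, rather than your detour through $L^1(\Omega\times\Omega)$ and interpolation, but the mechanism is identical), compactness as a norm limit of compact operators, and the mixed-norm statement from the integrable singularity $(t-s)^{-\frac{N}{2}(\frac1p-\frac1q)}$ under \eqref{eq:pqr-cond} together with dominated convergence applied to the operator-norm function of $t$.
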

\begin{proof}
  First we look at strong convergence of $U_\lambda(t,s)$. For $u_0\in
  L^p(\Omega)$ non-negative we know from Theorem~\ref{thm:unif-est} that
  $U_\lambda(\cdot\,,s)u_0$ decreases as $\lambda\to\infty$ on the
  cylinder $\Omega\times(s,T]$ and therefore
  \begin{displaymath}
    u(x,t):=\lim_{\lambda\to\infty}[U_\lambda(t,s)u_0](x)=
    \lim_{\lambda\to\infty}\int_\Omega k_\lambda(t,s,x,y)u_0(y)\,dy
  \end{displaymath}
  exists for all $(x,t)\in\Omega\times(s,T]$. By \eqref{eq:uhkest} also
    \begin{equation}
      \label{eq:kplim}
      0\leq
      k_\infty(t,s,x,y)
      :=\lim_{\lambda\to\infty}k_\lambda(t,s,x,y)
      \leq Me^{\omega(t-s)}(t-s)^{-N/2}e^{-c\frac{|x-y|^2}{t-s}}
  \end{equation}
  exists for all $0\leq s<t\leq T$ and $x,y\in\Omega$. By the dominated
  convergence theorem
  \begin{displaymath}
    u(x,t)=\lim_{\lambda\to\infty}\int_\Omega
    k_\lambda(t,s,x,y)u_0(y)\,dy
    =\int_\Omega k_\infty(t,s,x,y)u_0(y)\,dy,
  \end{displaymath}
  so $U_\infty(t,s)$ has a kernel with a Gaussian estimate.  By
  splitting an arbitrary initial condition into its positive and
  negative part the above limit exists for every $u_0\in L^p(\Omega)$.

  Let now $1<p\leq q<\infty$. By H\"older's inequality
  \begin{displaymath}
    \begin{split}
      \|(U_\lambda(t,s)&-U_\infty(t,s))u_0\|_q\\
      &=\Bigl(\int_\Omega
      \Bigl|\int_\Omega
      \bigl(k_\lambda(x,y,t,s)-k_\infty(x,y,t,s)\bigr)u_0(y)\,dy
      \Bigr|^{q}\,dx \Bigr)^{1/q}\\
      &\leq\Bigl(\int_\Omega
      \Bigl(\int_\Omega
      \bigl(k_\lambda(x,y,t,s)-k_\infty(x,y,t,s)\bigr)^{p'}\,dy
      \Bigr)^{q/p'}\,dx \Bigr)^{1/q}\|u_0\|_p.
    \end{split}
  \end{displaymath}
  By \eqref{eq:uhkest}, \eqref{eq:kplim} and the dominated convergence
  theorem
  \begin{multline*}
    \|U_\lambda(t,s)-U_\infty(t,s)\|_{\mathcal L(L^p,L^q)}\\
    \leq\Bigl(\int_\Omega
    \Bigl(\int_\Omega
    \bigl(k_\lambda(x,y,t,s)-k_\infty(x,y,t,s)\bigr)^{p'}\,dy
    \Bigr)^{q/p'}\,dx \Bigr)^{1/q}
    \to 0
  \end{multline*}
  as $\lambda\to\infty$. Hence $U_\lambda(t,s)\to U_\infty(t,s)$ in
  $\mathcal L(L^p(\Omega),L^q(\Omega))$ whenever $1<p\leq q<\infty$. As
  the limit of compact operators is a compact operator, we conclude that
  $U_\infty(t,s)\in\mathcal L\bigl(L^p(\Omega),L^q(\Omega)\bigr)$ is
  compact.

  We next prove convergence of $u_0\to U_\lambda(\cdot\,,s)u_0$ as a
  linear operator with respect to the norm in $\mathcal
  L\bigl(L^p(\Omega),L^r((s,T),L^q(\Omega))\bigr)$ for suitable
  $r,p,q$. We already know from what we proved above that
  \begin{displaymath}
    \|U_\lambda(t,s)-U_\infty(t,s)\|_{\mathcal L(L^p,L^q)}\to 0
  \end{displaymath}
  for every $t\in(s,T]$ if $1<p\leq q<\infty$. We need to show that
  \begin{equation}
    \label{eq:U-norm-conv}
    \int_s^T\|U_\lambda(t,s)-U_\infty(t,s)\|_{\mathcal L(L^p,L^q)}^r\,dt\to 0
  \end{equation}
  as $\lambda\to\infty$. We deduce from \eqref{eq:hkest} that
  \begin{multline*}
    \|U_\lambda(t,s)-U_\infty(t,s)\|_{\mathcal L(L^p,L^q)}\\
    \leq\|U_\lambda(t,s)\|_{\mathcal L(L^p,L^q)}
    +\|U_\infty(t,s)\|_{\mathcal L(L^p,L^q)}
    \leq 2M(t-s)^{-\frac{N}{2}(\frac{1}{p}-\frac{1}{q})}e^{|\omega| T}
  \end{multline*}
  for all $0<s<t\leq T$ with constants $M$ and $\omega$ independent of
  $\lambda>0$. We note that
  \begin{displaymath}
    \int_s^T(t-s)^{-\frac{N}{2}(\frac{1}{p}-\frac{1}{q})r}
    e^{\omega (t-s)r}\,dt
    \leq e^{\omega rT}
    \int_s^T(t-s)^{-\frac{N}{2}(\frac{1}{p}-\frac{1}{q})r}\,dt
    <\infty
  \end{displaymath}
  if and only if \eqref{eq:pqr-cond} is satisfied. Hence,
  \eqref{eq:U-norm-conv} follows from the dominated convergence theorem.
\end{proof}
\begin{remark}
  The family $U_\infty(t,s)$, $0\leq s\leq t\leq T$, is not in general
  an evolution operator since in general $U_\infty(s,s$ is not the
  identity, but only a projection. In the extreme case where $m(x,t)>0$
  in $\Omega\times[0,T]$, then $U_\infty(t,s)=0$ is the zero
  operator. Hence we need conditions that guarantee that
  $U_\infty(\cdot\,,\cdot)$ is non-trivial.
\end{remark}
\begin{proposition}
  \label{prop:lower-bound}
  Suppose that $m\in L^\infty\bigl(\Omega\times (0,T)\bigr)$ and that
  there exists a non-empty open set $\Omega_0\subset\Omega$, $s_0\in
  [0,T)$ and $\varepsilon>0$ so that $m=0$ almost everywhere on
  $\Omega_0\times(s_0,s_0+\varepsilon)$. Then $U_\infty(t,s)\neq 0$ for
  $s_0<s\leq t<s_0+\varepsilon$. More precisely if $K(x,y,t,s)$ is the
  kernel of the evolution operator of the problem
  \begin{equation}
    \label{eq:hivp0}
    \begin{aligned}
      \frac{\partial u}{\partial t}+\mathcal A(t)u
      &=0
      &&\text{in $\Omega_0\times(s_0,s_0+\varepsilon)$,}\\
      u&=0
      &&\text{on $\partial \Omega_0\times(s_0,s_0+\varepsilon)$,}\\
      u(x,s_0)&=u_{s_0}(x)&&\text{in $\Omega_0$,}
    \end{aligned}
  \end{equation}
  then $k_\infty(x,y,t,s)\geq K(x,y,t,s)>0$ for all $x,y\in \Omega_0$ and
  all $s_0\leq s\leq t\leq s_0+\varepsilon$.
\end{proposition}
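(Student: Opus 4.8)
The plan is to reduce the statement to a domain‑monotonicity (comparison) estimate at fixed $\lambda$ and then let $\lambda\to\infty$. Because $m=0$ a.e.\ on $\Omega_0\times(s_0,s_0+\varepsilon)$, the term $\lambda mu$ is absent from \eqref{eq:hivp0}, so $K$ does not depend on $\lambda$; and by Theorem~\ref{thm:unif-est} the kernels $k_\lambda$ decrease to $k_\infty$ as $\lambda\to\infty$. Hence it is enough to prove, for each fixed $\lambda\ge0$,
\begin{equation*}
  k_\lambda(x,y,t,s)\ \ge\ K(x,y,t,s)\qquad (x,y\in\Omega_0,\ s_0\le s<t\le s_0+\varepsilon)
\end{equation*}
and then pass to the limit. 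Writing $E\colon L^2(\Omega_0)\to L^2(\Omega)$ for extension by zero and $V_{\Omega_0}(t,s)$ for the (positive, continuous‑kernel) evolution operator of \eqref{eq:hivp0}, this is equivalent, since both are kernel operators depending continuously on their arguments, to the comparison estimate
\begin{equation*}
  U_\lambda(t,s)Eu_s\ \ge\ E\bigl(V_{\Omega_0}(t,s)u_s\bigr)\quad\text{on }\Omega,\qquad 0\le u_s\in L^2(\Omega_0).
\end{equation*}

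To prove this, fix $s\in[s_0,s_0+\varepsilon)$ and $0\le u_s\in L^2(\Omega_0)$; let $v$ be the solution of the equation in \eqref{eq:hivp0} on $(s,s_0+\varepsilon)$ with value $u_s$ at time $s$ (so $v\ge0$), put $\widetilde v:=Ev$ and $u_\lambda:=U_\lambda(\cdot,s)Eu_s\ge0$. Conceptually $\widetilde v$ is a weak \emph{sub}solution of \eqref{eq:ivbp} on $\Omega$ with $f=0$ and initial value $Eu_s$: the potential drops out because $m=0$ on $\Omega_0\supseteq\supp\widetilde v$, on $\Omega_0$ the function $v$ solves the equation, and extending a non‑negative function that vanishes on $\partial\Omega_0$ by zero across $\partial\Omega_0$ only adds a non‑positive distributional term concentrated on $\partial\Omega_0$; so the parabolic comparison principle should give $\widetilde v\le u_\lambda$. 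To make this precise I would test the weak formulations of the equations for $v$ (on $\Omega_0$) and for $u_\lambda$ (on $\Omega$) against $\rho(t):=(v(t)-u_\lambda(t))^+$, respectively against its zero extension $E\rho(t)$. The key observation is that $\rho(t)\in H^1_0(\Omega_0)$ for a.e.\ $t$: indeed $0\le\rho(t)\le v(t)$ with $v(t)\in H^1_0(\Omega_0)$, so the non‑negative $H^1(\Omega_0)$‑function $\rho(t)$ is dominated by an $H^1_0(\Omega_0)$‑function and hence lies in $H^1_0(\Omega_0)$; thus $E\rho(t)\in H^1_0(\Omega)\subseteq V$ is admissible on $\Omega$, and in $a(t,\cdot,\cdot)$ the boundary integral and the $\lambda m$‑term both vanish. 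Subtracting the two tested identities and using $\nabla\rho=\nabla(v-u_\lambda)$ on $\{v>u_\lambda\}$ leaves, for a.e.\ $t$,
\begin{equation*}
  \bigl\langle\partial_t(v-u_\lambda),\rho\bigr\rangle+a_{\Omega_0}(t,\rho,\rho)=0,
\end{equation*}
where $a_{\Omega_0}$ is $a(t,\cdot,\cdot)$ with the boundary integral deleted. The coercivity estimate preceding \eqref{eq:gamma} gives $a_{\Omega_0}(t,\rho,\rho)\ge-\gamma\|\rho\|_2^2$, and the energy identity $\langle\partial_t(v-u_\lambda),\rho\rangle=\tfrac12\tfrac{d}{dt}\|\rho\|_2^2$ then yields $\tfrac{d}{dt}\|\rho(t)\|_2^2\le2\gamma\|\rho(t)\|_2^2$; since $\rho(s)=(u_s-u_s)^+=0$, Gronwall's inequality forces $\rho\equiv0$, i.e.\ $v\le u_\lambda$ on $\Omega_0$, which is the claimed comparison estimate.

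Finally, $K>0$. Applying the theory used in Theorem~\ref{thm:unif-est} on the bounded open set $\Omega_0$ (which we may take connected), $V_{\Omega_0}(t,s)$ is positive and irreducible with a continuous, Gaussian‑bounded kernel, so $K(x,y,t,s)>0$ for all $x,y\in\Omega_0$ and $s_0\le s<t\le s_0+\varepsilon$. Combining this with the comparison estimate and letting $\lambda\to\infty$ gives $k_\infty(x,y,t,s)\ge K(x,y,t,s)>0$, and in particular $U_\infty(t,s)\neq0$ for $s_0<s\le t<s_0+\varepsilon$.

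The delicate point is the rigorous justification of the comparison step. Both the claim that $\widetilde v$ is a genuine weak subsolution on $\Omega$ (one must control the possible singular part of $\partial_t\widetilde v$ on $\partial\Omega_0$) and the energy identity for $v-u_\lambda$ need care, since $v-u_\lambda$ lies only in $L^2\bigl((s,s_0+\varepsilon),H^1(\Omega_0)\bigr)$ with time‑derivative in $L^2\bigl((s,s_0+\varepsilon),H^{-1}(\Omega_0)\bigr)$, which is not a Gelfand triple adapted to the truncation $(\cdot)^+$. I would handle this in the standard way: mollify in time, use the convexity of $w\mapsto w^+$ to keep the mollified positive parts inside $H^1_0(\Omega_0)$, apply the smooth chain rule and pass to the limit. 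Alternatively one can phrase the argument as a parabolic maximum principle with the non‑positive measure $\partial_t\widetilde v+\mathcal A(t)\widetilde v$ on the right‑hand side, together with the variation‑of‑constants formula and positivity of $U_\lambda(t,\cdot)$ as in the proof of Theorem~\ref{thm:unif-est}. In all cases the argument is insensitive to the type of boundary condition on $\partial\Omega$, because $E\rho(t)\in H^1_0(\Omega)$ regardless.
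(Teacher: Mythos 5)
Your proposal is correct and follows the same skeleton as the paper's proof: reduce to the fixed-$\lambda$ domain-monotonicity estimate $k_\lambda\geq K$ on $\Omega_0$ (possible because $\mathcal A(t)+\lambda m$ and $\mathcal A(t)$ coincide there), observe that $K$ does not depend on $\lambda$ and is strictly positive by irreducibility of the Dirichlet evolution operator on (a connected) $\Omega_0$, and then let $\lambda\to\infty$ using the monotone kernel limit \eqref{eq:kplim}. The only real difference is how the comparison $k_\lambda\geq K$ is obtained: the paper simply cites \cite[Theorem~8.3]{daners:00:hke} (together with the remark that the Neumann/Robin kernel dominates the Dirichlet kernel), whereas you prove it directly by testing with $\rho=(v-u_\lambda)^+$, using that a non-negative $H^1(\Omega_0)$-function dominated by an element of $H^1_0(\Omega_0)$ lies in $H^1_0(\Omega_0)$, so that $E\rho\in H^1_0(\Omega)\subseteq V$ is admissible for either boundary operator $\mathcal B$ and both the boundary integral and the $\lambda m$-term drop out. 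That argument is sound; its one delicate point, the chain rule $\langle\partial_t(v-u_\lambda),\rho\rangle=\tfrac12\tfrac{d}{dt}\|\rho\|_2^2$ outside a Gelfand triple adapted to the truncation, is exactly the one you flag, and the Steklov-averaging/mollification fix you propose is the standard remedy. What your route buys is a self-contained proof independent of the external kernel-domination theorem; what the paper's route buys is brevity and, via the cited result, the strict positivity of $K$ and the Gaussian bounds in one stroke.
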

\begin{proof}
  Clearly the operators $\mathcal A(t)+\lambda m$ and $\mathcal A(t)$
  coincide on $\Omega_0\times(s_0,s_0+\varepsilon)$. Hence from
  \cite[Theorem~8.3]{daners:00:hke} we deduce that
  \begin{displaymath}
    k_\lambda(x,y,t,s)\geq K(x,y,t,s)>0
  \end{displaymath}
  for all $x,y\in \Omega_0$ and all $s_0\leq s\leq t\leq
  s_0+\varepsilon$. Here we also use that the kernel of the problem with
  Neumann or Robin boundary conditions dominates that of the problem with
  Dirichlet boundary conditions. Now the assertion of the theorem
  follows from \eqref{eq:kplim}.
\end{proof}
Using the evolution operator we can generalise the notion of solution of
\eqref{eq:pevp} for right hand sides not necessarily in
$L^2\bigl((0,T),V'\bigr)$.
\begin{definition}
  \label{def:mild-solution}
  Let $1\leq r,p\leq\infty$, $u_0\in L^p(\Omega)$ and $f\in
  L^r\bigl(0,T),L^p(\Omega)\bigr)$. We call
  \begin{equation}
    \label{eq:mild-solution}
    u(t)=U_{\lambda}(t,0)u_{0}
    +\int_0^tU_{\lambda}(t,\tau)f(\tau)\,d\tau,
  \end{equation}
  $t\in[0,T]$ a \emph{mild solution} of \eqref{eq:ivbp}. Likewise we
  call a $u$ a mild solution of the limit problem as $\lambda\to\infty$
  if
  \begin{equation}
    \label{eq:mild-solution-limit}
    u(t)=U_\infty(t,0)u_{0}
    +\int_0^tU_\infty(t,\tau)f(\tau)\,d\tau,
  \end{equation}
  for all $t\in[0,T]$.
\end{definition}
\begin{remark}
  \label{rem:mild-solution}
  By the Sobolev embedding theorem $V\hookrightarrow L^q\Omega)$ for
  $q\leq 2N/(N-2)$ if $N\geq 3$ and $q<\infty$ if $N=2$. Hence
  $L^p(\Omega)\hookrightarrow V'$ for $p\geq 2N/(N+2)$ if $N\geq 3$ and
  $p>1$ if $N=2$. Thus, if $r\geq 2$, then
  \begin{displaymath}
    L^r\bigl((0,T),L^p(\Omega)\bigr)
    \hookrightarrow L^2\bigl((0,T),V'\bigr)
  \end{displaymath}
  for $p\geq 2N/(N+2)$ if $N\geq 3$ and $p>1$ if $N=2$. The above
  embedding always holds if $N=1$ and $1\leq p\leq\infty$. In these
  cases every mild solution of \eqref{eq:pevp} is a weak solution of
  \eqref{eq:pevp}.
\end{remark}
Now that we know that the limit problem is non-trivial in general we
strengthen some results from Theorem~\ref{thm:compact}.
\begin{theorem}
  \label{thm:compact-Lp}
  Assume that $m$ satisfies \eqref{eq:m-top-reg}. Suppose that
  $1<p\leq q\leq\infty$ and $1<r<\infty$ such that
  \begin{equation}
    \label{eq:pqrr-cond}
    \frac{N}{2}\Bigl(\frac{1}{p}-\frac{1}{q}\Bigr)
    <\min\Bigl\{\frac{1}{r},1-\frac{1}{r}\Bigr\},
  \end{equation}
  Assume that $u_{0n}\rightharpoonup u_0$ weakly in $L^p(\Omega)$ and
  that $f_n\to f$ in $L^r\bigl((0,T),L^p(\Omega)\bigr)$.  Let $u_n$ be
  the mild solution of \eqref{eq:ivbp} with $\lambda$ replaced by
  $\lambda_n$, $f$ replaced by $f_n$ and $u_0$ replaced by
  $u_{0n}$. Finally suppose that $\lambda_n\to\infty$.  Then $u_n(t)\to
  u(t)$ in $L^q(\Omega)$ for all $t\in (0,T]$ and $u$
  satisfies \eqref{eq:mild-solution-limit}. Moreover, $u_n\to u$ in
  $L^r\bigl((0,T),L^q(\Omega)\bigr)$.
\end{theorem}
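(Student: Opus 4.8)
The plan is to argue directly from the mild-solution formula \eqref{eq:mild-solution}, using the uniform Gaussian bounds of Theorem~\ref{thm:unif-est} and the convergence $U_\lambda(t,s)\to U_\infty(t,s)$ of Theorem~\ref{thm:U-limit}. Write $\beta:=\tfrac N2(\tfrac1p-\tfrac1q)$, so that \eqref{eq:hkest} gives $\|U_\lambda(t,s)\|_{\mathcal L(L^p,L^q)}\le M(t-s)^{-\beta}e^{|\omega|T}$ uniformly in $\lambda\ge0$ and $0\le s<t\le T$, and observe that \eqref{eq:pqrr-cond} says precisely $\beta<\tfrac1r$ \emph{and} $\beta<1-\tfrac1r$ simultaneously. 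The candidate limit is of course $u(t):=U_\infty(t,0)u_0+\int_0^tU_\infty(t,\tau)f(\tau)\,d\tau$, which is well defined because $\beta<1-\tfrac1r$ makes $(t-\cdot)^{-\beta}$ integrable against $\|f(\cdot)\|_p\in L^r$; once $u_n(t)\to u(t)$ is shown, \eqref{eq:mild-solution-limit} holds by the very definition of $u$. I first treat $1<p\le q<\infty$ and pass to $q=\infty$ at the end.

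For fixed $t\in(0,T]$ I would split $u_n(t)-u(t)$ into an initial-data part and a Duhamel part. In the initial-data part I write $U_{\lambda_n}(t,0)u_{0n}-U_\infty(t,0)u_0=[U_{\lambda_n}(t,0)-U_\infty(t,0)]u_{0n}+U_\infty(t,0)(u_{0n}-u_0)$: the first term is at most $\|U_{\lambda_n}(t,0)-U_\infty(t,0)\|_{\mathcal L(L^p,L^q)}\|u_{0n}\|_p$, which tends to $0$ by the operator-norm convergence of Theorem~\ref{thm:U-limit} together with boundedness of the weakly convergent sequence $(u_{0n})$; the second tends to $0$ in $L^q$ because $U_\infty(t,0)\colon L^p\to L^q$ is compact (Theorem~\ref{thm:U-limit}) and compact operators map weakly convergent sequences to norm-convergent ones. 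In the Duhamel part I split off the fixed limit $f$: the piece $\int_0^t[U_{\lambda_n}(t,\tau)-U_\infty(t,\tau)](f_n-f)(\tau)\,d\tau$ is bounded in $L^q$ by $2Me^{|\omega|T}\int_0^t(t-\tau)^{-\beta}\|(f_n-f)(\tau)\|_p\,d\tau\le C\|f_n-f\|_{L^r((0,T),L^p)}\to0$, by H\"older's inequality in $\tau$ (using $\beta<1-\tfrac1r$, so $(t-\cdot)^{-\beta}\in L^{r'}(0,t)$), while in $\int_0^t[U_{\lambda_n}(t,\tau)-U_\infty(t,\tau)]f(\tau)\,d\tau$ the integrand tends to $0$ in $L^q$ for a.e.\ $\tau$ by Theorem~\ref{thm:U-limit} and is dominated by the integrable function $2Me^{|\omega|T}(t-\tau)^{-\beta}\|f(\tau)\|_p$, so dominated convergence applies. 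This gives $u_n(t)\to u(t)$ in $L^q(\Omega)$ for every $t\in(0,T]$.

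To upgrade to convergence in $L^r((0,T),L^q(\Omega))$ I would dominate $\|u_n(t)\|_q\le Me^{|\omega|T}\bigl(t^{-\beta}\|u_{0n}\|_p+\int_0^t(t-\tau)^{-\beta}\|f_n(\tau)\|_p\,d\tau\bigr)$; after passing to a subsequence along which $\|f_n(\cdot)\|_p\le H(\cdot)$ a.e.\ for a fixed $H\in L^r(0,T)$ (possible since $f_n\to f$ in $L^r((0,T),L^p)$), the right-hand side is bounded by $\phi(t):=Ct^{-\beta}+C\bigl((\cdot)^{-\beta}\mathbf{1}_{(0,T)}*H\bigr)(t)$, which lies in $L^r(0,T)$ --- the first summand because $\beta<\tfrac1r$, the second by Young's convolution inequality since $(\cdot)^{-\beta}\in L^1(0,T)$. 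The same majorant bounds $\|u(t)\|_q$, so $\|u_n(t)-u(t)\|_q\le2\phi(t)$ along the subsequence, and since $\|u_n(t)-u(t)\|_q\to0$ for every $t\in(0,T]$, dominated convergence yields $u_n\to u$ in $L^r((0,T),L^q)$; because every subsequence admits such a further subsequence with the same limit $u$, the whole sequence converges. For $q=\infty$ one bootstraps: for finite $q_0$ the above already gives $u_n(t/2)\to u(t/2)$ in $L^{q_0}$, and from $u_n(t)=U_{\lambda_n}(t,t/2)u_n(t/2)+\int_{t/2}^tU_{\lambda_n}(t,\tau)f_n(\tau)\,d\tau$ one concludes $u_n(t)\to u(t)$ in $L^\infty$ using the $L^{q_0}$-to-$L^\infty$ smoothing of the evolution operator (from \eqref{eq:hkest}), the compactness of $U_\infty(t,t/2)$, and the operator-norm convergence $U_{\lambda_n}(t,t/2)\to U_\infty(t,t/2)$ in $\mathcal L(L^{q_0},L^\infty)$ extracted from the kernel estimates \eqref{eq:uhkest} and \eqref{eq:kplim}; the $L^r$-in-time statement then follows exactly as before.

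The main obstacle is bookkeeping rather than any single hard estimate: one must consistently isolate the fixed pieces ($U_\infty$ and $f$), to which the honest convergence statements of Theorem~\ref{thm:U-limit} apply, from the moving pieces ($U_{\lambda_n}$, $f_n$, $u_{0n}$), which are controlled only through the uniform Gaussian bound, and one must check that $\beta$ stays below $\tfrac1r$ in every estimate involving the singularity at $t=0$ and below $1-\tfrac1r$ in every estimate involving the Duhamel singularity at $\tau=t$ --- which is exactly why \eqref{eq:pqrr-cond} takes a two-sided form. A secondary technical point is the $q=\infty$ endpoint, where Theorem~\ref{thm:U-limit} does not apply verbatim and the operator-norm convergence into $L^\infty$ has to be recovered by hand from the monotone pointwise convergence of the kernels.
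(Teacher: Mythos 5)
Your argument is essentially the paper's: the same splitting of the initial-data term into $[U_{\lambda_n}(t,0)-U_\infty(t,0)]u_{0n}$ plus $U_\infty(t,0)(u_{0n}-u_0)$ handled by operator-norm convergence and compactness, the same H\"older/singular-kernel estimates with $\beta<1/r$ controlling the singularity at $t=0$ and $\beta<1-1/r$ controlling the Duhamel singularity at $\tau=t$, and dominated convergence in $t$ for the $L^r((0,T),L^q)$ statement (the paper gets its majorant from a direct bound $Ct^{1-\eta}$ on the Duhamel term rather than your Young's-inequality-plus-dominated-subsequence route, but both work). One slip: the two Duhamel pieces you estimate, $\int_0^t[U_{\lambda_n}-U_\infty](f_n-f)\,d\tau$ and $\int_0^t[U_{\lambda_n}-U_\infty]f\,d\tau$, sum to $\int_0^t(U_{\lambda_n}-U_\infty)f_n\,d\tau$, not to $\int_0^tU_{\lambda_n}f_n\,d\tau-\int_0^tU_\infty f\,d\tau$; the missing term $\int_0^tU_\infty(t,\tau)(f_n-f)(\tau)\,d\tau$ is, however, controlled by exactly the H\"older bound you already use, so this is bookkeeping rather than a gap. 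On the $q=\infty$ endpoint you are actually more careful than the paper, which invokes Theorem~\ref{thm:U-limit} even though that theorem is stated only for $q<\infty$; but note that your claimed norm convergence $U_{\lambda_n}(t,t/2)\to U_\infty(t,t/2)$ in $\mathcal L(L^{q_0},L^\infty)$ does not follow immediately from \eqref{eq:uhkest} and \eqref{eq:kplim}, since pointwise monotone convergence of the kernels plus the Gaussian bound gives $\int_\Omega(k_{\lambda_n}-k_\infty)^{q_0'}\,dy\to0$ for each fixed $x$ but not uniformly in $x$; one needs an extra ingredient there (a Dini-type argument using continuity of the kernels, or factoring through a compact operator at an intermediate time), so that step deserves an explicit justification.
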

\begin{proof}
  We know that
  \begin{equation}
    \label{eq:mild-solution-n}
    u_n(t)=U_{\lambda_n}(t,0)u_{0n}
    +\int_0^tU_{\lambda_n}(t,\tau)f_n(\tau)\,d\tau
  \end{equation}
  for all $t\in(0,T]$.  As $(u_{0n})$ is weakly convergent in
  $L^p(\Omega)$ there exists $c_1>0$ such that $\|u_{0n}\|_p\leq c_1$
  for all $n\in\mathbb N$. Moreover, since $U_\infty(t,0)\in\mathcal
  L\bigl(L^p(\Omega),L^q(\Omega)\bigr)$ is compact and
  $U_{\lambda_n}(t,0)\to U_\infty(t,0)$ in $\mathcal
  L\bigl(L^p(\Omega),L^q(\Omega)\bigr)$ by Theorem~\ref{thm:U-limit} we
  see that
  \begin{multline*}
    \|U_{\lambda_n}(t,0)u_{0n}-U_\infty(t,0)u_0\|_q\\
    \leq\|U_{\lambda_n}(t,0)-U_\infty(t,0)\|_{\mathcal L(L^p,L^q)}\|u_{0n}\|_p
    +\|U_\infty(t,0)(u_{0n}-u_0)\|_q
    \to 0
  \end{multline*}
  for every $t\in(0,T]$ as $n\to\infty$. Using the uniform kernel
  estimates from Theorem~\ref{thm:unif-est} we see that
  \begin{displaymath}
    \|U_{\lambda_n}(t,0)u_{0n}-U_\infty(t,0)u_0\|_q
    \leq 2e^{|\omega| T}Mc_1t^{-\frac{N}{2}(\frac{1}{p}-\frac{1}{q})}
  \end{displaymath}
  for all $t\in(0,T]$. As $t^{-\frac{N}{2}(\frac{1}{p}-\frac{1}{q})r}$
  is integrable on $(0,T)$ by \eqref{eq:pqrr-cond}, the dominated
  convergence theorem implies that $U_{\lambda_n}(\cdot\,,0)u_{0n}\to
  U_\infty(\cdot\,,0)u_0$ in $L^r\bigl((0,T),L^q(\Omega)\bigr)$.

  We next deal with the integral term in
  \eqref{eq:mild-solution-n}. Using H\"older's inequality,
  \begin{equation}
    \label{eq:conv-inhom-part}
    \begin{split}
      \int_0^t\|&U_{\lambda_n}(t,\tau)f_n(\tau)
      -U_\infty(t,\tau)f(\tau)\|_q\,d\tau\\
      &\leq\int_0^t\|U_{\lambda_n}(t,\tau)-U_\infty(t,\tau)\|_{\mathcal
        L(L^p,L^q)}
      \|f_n(\tau)\|_p\,d\tau\\
      &\qquad\qquad+\int_0^t\|U_\infty(t,\tau)\|_{\mathcal
        L(L^p,L^q)}\|f_n(\tau)-f(\tau)\|_q\,d\tau\\
      &\leq\Bigl(\int_0^t\|U_{\lambda_n}(t,\tau)-U_\infty(t,\tau)\|_{\mathcal
        L(L^p,L^q)}^{\frac{r}{r-1}}\tau\Bigr)^{1-\frac{1}{r}}\,d\tau
      \|f_n\|_{L^r((0,T)L^p)}\\
      &\qquad\qquad+\Bigl(\int_0^t\|U_{\lambda_n}(t,\tau)\|_{\mathcal
        L(L^p,L^q)}^{\frac{r}{r-1}}\,d\tau\Bigr)^{1-\frac{1}{r}}
      \|f_n-f\|_{L^r((0,T)L^p)}
    \end{split}
  \end{equation}
  for all $t\in[0,T]$. By again using the uniform kernel estimates
  \begin{displaymath}
    \|U_{\lambda_n}(t,\tau)-U_\infty(t,\tau)\|_{\mathcal L(L^p,L^q)}
    \leq 2e^{|\omega|T}M(t-\tau)^{-\frac{N}{2}(\frac{1}{p}-\frac{1}{q})}
  \end{displaymath}
  for all $0\leq \tau<t\leq T$. It follows from \eqref{eq:pqrr-cond}
  that
  \begin{displaymath}
    \eta:=\frac{N}{2}\Bigl(\frac{1}{p}-\frac{1}{q}\Bigr)\frac{r}{r-1}<1.
  \end{displaymath}
  and hence $(t-\tau)^{-\eta}$ is integrable on $(0,t)$. As $f_n\to f$
  in $L^r\bigl((0,T),L_p(\Omega)\bigr)$ we conclude from
  Theorem~\ref{thm:U-limit} and \eqref{eq:conv-inhom-part} that
  \begin{displaymath}
    \int_0^tU_{\lambda_n}(t,\tau)f_n(\tau)\,d\tau
    \to \int_0^tU_\infty(t,\tau)f(\tau)\,d\tau
  \end{displaymath}
  in $L^q(\Omega)$ for all $t\in (0,T]$. It also follows that
  \begin{displaymath}
    \Bigl\|\int_0^tU_{\lambda_n}(t,\tau)f_n(\tau)\,d\tau\Bigr\|_q
    \leq e^{|\omega|T}C t^{1-\eta}
  \end{displaymath}
  for all $t\in [0,T]$, where $C$ is a constant independent of $n$ and
  $1-\eta>0$. In particular, the integral part in
  \eqref{eq:mild-solution-n} converges in
  $L^r\bigl((0,T),L^q(\Omega)\bigr)$ as well.
\end{proof}
In the above theorem we have excluded the case $r,q=\infty$, that is,
uniform convergence. The next theorem shows local convergence in a space
of (locally) H\"older continuous functions on
$D_m\cap(\Omega\times(\varepsilon,T])$ for every $\varepsilon\in(0,T)$. 
\begin{theorem}
  \label{thm:compact-holder}
  Assume that $m$ satisfies \eqref{eq:m-top-reg}. Suppose that
  $N/2<p\leq\infty$ and $2\leq r<\infty$ such that
  \begin{equation}
    \label{eq:rp-condition}
    \frac{N}{2p}+\frac{1}{r}<1,
  \end{equation}
  Assume that $u_{0n}\rightharpoonup u_0$ weakly in $L^p(\Omega)$ and
  that $f_n\to f$ in $L^r\bigl((0,T),L_p(\Omega)\bigr)$.  Let $u_n$ be
  the mild solution of \eqref{eq:ivbp} with $\lambda$ replaced by
  $\lambda_n$, $f$ replaced by $f_n$ and $u_0$ replaced by
  $u_{0n}$. Finally suppose that $\lambda_n\to\infty$. 
  Then for every $\varepsilon\in(0,T)$ and every compact subset
  $K\subseteq D_m\cap(\Omega\times[\varepsilon,T])$ there exists
  $\beta\in(0,1)$ such that $u_n\to u$ in $C^\beta(K)$.
\end{theorem}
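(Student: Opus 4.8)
The plan is to bootstrap from the $L^r(L^q)$-convergence already in hand (Theorem~\ref{thm:compact-Lp}) to interior H\"older estimates by using the local smoothing of the parabolic equation on the degeneracy set $D_m$. The key observation is that on any compact $K\subseteq D_m\cap(\Omega\times[\varepsilon,T])$ we may choose an open neighbourhood $Q$ with $\overline Q\subseteq D_m\cap(\Omega\times(\varepsilon/2,T])$; on $Q$ the weight $m$ vanishes, so $u_n$ and $u$ both solve the \emph{same} non-degenerate parabolic equation $\partial_t w+\mathcal A(t)w=f_n$ (respectively $f$) there, with no $\lambda_n$-dependence in the equation itself. Thus the difference $w_n:=u_n-u$ is a local weak solution of $\partial_t w_n+\mathcal A(t)w_n=f_n-f$ on $Q$.

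First I would verify that the hypotheses \eqref{eq:rp-condition} with $N/2<p\le\infty$, $2\le r<\infty$ are strong enough to place $u_n,u$ and $f_n,f$ in the right spaces: taking $q$ close to $\infty$ (or $q=\infty$) is permitted by \eqref{eq:pqrr-cond} precisely when $N/(2p)+1/r<1$, so Theorem~\ref{thm:compact-Lp} gives $u_n\to u$ in $L^r((0,T),L^q(\Omega))$ for all $q<\infty$, and in particular $u_n$ is bounded in $L^r((0,T),L^q(Q))$. Combined with $f_n\to f$ in $L^r((0,T),L^p(\Omega))$ and the condition \eqref{eq:rp-condition}, the parabolic $L^p$--$L^q$ interior regularity theory (of Ladyzhenskaya--Solonnikov--Ural'tseva type, or the semigroup/De Giorgi--Nash--Moser approach adapted to measurable coefficients) yields a local H\"older estimate: there is $\beta\in(0,1)$, depending only on $N$, the ellipticity constant $\alpha$, the $L^\infty$ bounds on the coefficients, $p$, $r$, and on $\mathrm{dist}(K,\partial Q)$, such that
\begin{displaymath}
  \|w_n\|_{C^\beta(K)}\le C\bigl(\|w_n\|_{L^r((0,T),L^q(Q))}+\|f_n-f\|_{L^r((0,T),L^p(Q))}\bigr).
\end{displaymath}
Since the right-hand side tends to $0$ by Theorem~\ref{thm:compact-Lp} and the hypothesis on $f_n$, we conclude $u_n\to u$ in $C^\beta(K)$.

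The main obstacle I anticipate is making the interior H\"older estimate fully rigorous for operators in divergence form with merely bounded measurable coefficients \emph{including} a nonzero drift $a$ and zeroth-order term $c_0$, and with a right-hand side only in $L^r(L^p)$ rather than in a divergence-form space. The cleanest route is probably to invoke the parabolic Harnack/De Giorgi--Nash--Moser theory as it appears in, e.g., Lieberman or in Aronson's work on fundamental solutions: the coefficients are exactly of the class for which those results hold, and the condition \eqref{eq:rp-condition} is the standard scaling-invariant condition guaranteeing that the forcing term contributes a bounded, and in fact H\"older continuous, correction. One should also note that the cylinder $Q$ need not be a product set since $D_m$ is non-cylindrical, but since $K$ is compact inside the (relatively) open set $D_m\cap(\Omega\times(\varepsilon/2,T])$ one can cover $K$ by finitely many small space-time cylinders compactly contained in $D_m$, apply the interior estimate on each, and patch. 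A minor point to check is that the initial data play no role: because $K\subseteq\Omega\times[\varepsilon,T]$ with $\varepsilon>0$, the estimate is a genuinely interior (in time) one, so the weak convergence of $u_{0n}$ — rather than norm convergence — is harmless, the relevant input being only the established convergence $u_n\to u$ in $L^r((0,T),L^q)$ away from $t=0$.
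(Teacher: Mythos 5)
Your argument is essentially correct, but it takes a genuinely different route from the paper. The paper does \emph{not} estimate the difference $u_n-u$ directly: it first uses the uniform Gaussian kernel estimates \eqref{eq:U-est-1} on the mild-solution formula to show that $(u_n)$ is bounded in $L^\infty\bigl(\Omega\times(\varepsilon,T]\bigr)$, then invokes the Aronson--Serrin interior H\"older estimate to get a \emph{uniform} $C^\gamma(K)$ bound on the sequence $(u_n)$ itself, and finally combines the compact embedding $C^\gamma(K)\hookrightarrow C^\beta(K)$ with the already established weak $L^2$ convergence $u_n\rightharpoonup u$ to identify the limit and upgrade to norm convergence in $C^\beta(K)$. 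You instead exploit linearity: $w_n:=u_n-u$ is a local weak solution of $\partial_t w_n+\mathcal A(t)w_n=f_n-f$ on a neighbourhood of $K$ in $D_m$ (for $u$ this uses Corollary~\ref{cor:limit-problem}, which you should cite explicitly), and you apply the quantitative De~Giorgi--Nash--Moser estimate --- local boundedness followed by the H\"older estimate --- to $w_n$, so that the $C^\beta(K)$ norm is controlled by quantities that tend to zero. Your route is more direct, avoids the compactness/subsequence step, and yields an explicit modulus of convergence; the paper's route needs only the qualitative H\"older estimate in terms of $\sup|u_n|$ and sidesteps the need to verify that the limit solves the local equation before the H\"older step. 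Two small points to repair: (a) your claim that $q=\infty$ is ``permitted by \eqref{eq:pqrr-cond} precisely when $N/(2p)+1/r<1$'' is false --- for $r\geq 2$ condition \eqref{eq:pqrr-cond} with $q=\infty$ reads $N/(2p)<1/r$, which is strictly stronger than \eqref{eq:rp-condition}; this is harmless because the local boundedness estimate accepts any $L^s$ norm of the solution on the right-hand side, so the convergence $u_n\to u$ in $L^r((0,T),L^q)$ for $q$ near $p$ suffices, but the justification as written is wrong. (b) You should state that you use \emph{both} the local boundedness theorem and the H\"older continuity theorem of Aronson--Serrin, since the latter alone only bounds the H\"older seminorm by $\sup|w_n|$, not by an integral norm.
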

\begin{proof}
  First note that \eqref{eq:rp-condition} implies that $f_n\in
  L^2\bigl((0,T),V'\bigr)$, and that the sequence $(f_n)$ is bounded in
  that space; see Remark~\ref{rem:mild-solution}. Hence, by
  Theorem~\ref{thm:compact} $u_n\rightharpoonup u$ weakly in
  $L^2\bigl((0,T),V\bigr)$. By Corollary~\ref{cor:limit-problem} $u$ is
  a weak solution of \eqref{eq:ivbp-limit}. The weak solution of
  \eqref{eq:ivbp} is given by
  \begin{displaymath}
    u_n(t)=U_{\lambda_n}(t,0)u_{0n}+\int_0^tU_{\lambda_n}(t,\tau)f_n(\tau)\,d\tau.
  \end{displaymath}
  Using the uniform kernel estimates from Theorem~\ref{thm:unif-est} we
  see that
  \begin{equation}
    \label{eq:U-est-1}
    \begin{aligned}
      \|U_{\lambda_n}(t,0)u_{0n}\|_\infty
      &\leq Mt^{-\frac{N}{2p}}e^{\omega t}\|u_{0n}\|_p\\
      \|U_{\lambda_n}(t,\tau)f_n(\tau)\|_\infty
      &\leq M(t-\tau)^{-\frac{N}{2p}}e^{\omega(t-s)}\|f_n(\tau)\|_p
    \end{aligned}
  \end{equation}
  for all $n\in\mathbb N$ and all $0<\tau<t\leq T$. Hence by H\"older's
  inequality
  \begin{equation}
    \label{eq:uniform-bounded}
    \Bigl\|\int_0^tU_{\lambda_n}(t,\tau)f_n(\tau)\,d\tau\Bigr\|_\infty
    \leq Me^{|\omega|T}
    \Bigl(\int_0^t(t-\tau)^{-\frac{N}{2p}\frac{r}{r-1}}\,d\tau\Bigr)
    ^{1-\frac{1}{r}}\|f_n\|_{L^r((0,T),L^p)}
  \end{equation}
  The second integral in \eqref{eq:uniform-bounded} is finite if and
  only if \eqref{eq:rp-condition} holds. Putting everything together we
  see that the sequence $(u_n)$ is bounded in
  $L^\infty\bigl(\Omega\times(\varepsilon,T]\bigr)$ for every
  $\varepsilon>0$. Since $u_n$ is a solution of \eqref{eq:ivbp-limit}
  with $f$ replaced by $f_n$ we conclude from
  \cite[Theorem~4]{aronson:67:lbs} that for $\varepsilon\in(0,T)$ and
  every compact subset $K\subseteq D_m\cap([\varepsilon,T]\times\Omega)$
  there exists $\gamma\in(0,1)$ such that $u_n$ is bounded in
  $C^\gamma(K)$. As we know that $u_n\rightharpoonup u$ weakly in
  $L^2\bigl(\Omega\times(0,T)\bigr)$, we conclude that $u_n\to u$ in
  $C^\beta(K)$ for $\beta\in(0,\gamma)$. Here we use that H\"older
  spaces with different exponents embed compactly.
\end{proof}
\begin{remark}
  \label{rem:compact-holder}
  If we strengthen the regularity assumptions on the coefficients of
  $(\mathcal A(t),\mathcal B(t))$, $m$ and $f_n$ we obtain (local)
  convergence in $D_m$ in stronger norms. In particular, assume that the
  the coefficients of the diffusion matrix $D$ and the vector field $a$
  in \eqref{eq:A} are in $C^{1+\beta,\beta/2}$ and $b,c,b_0,m,f_n$ are
  in $C^{\beta,\beta/2}$ for some $\beta\in(0,1)$. Then the Schauder
  theory in \cite[Theorem~VI.10.1]{ladyzenskaya:68:lqp} or
  \cite[Theorem~3.4.9]{lieberman:96:sop} shows that on every compact
  subset $K\subseteq D_m\cap(\Omega\times[\varepsilon,T])$ there exists
  $\gamma\in(0,1)$ such that $u_n$ is bounded in
  $C^{2+\gamma,1+\gamma/2}(K)$. Hence $u_n\to u$ in
  $C^{2+\beta,1+\beta/2}(K)$ for $\beta\in(0,\gamma)$. Convergence in
  $C^{\beta,\beta/2}$ may also be true up to the boundary depending on
  the regularity of $D_m$, in particular if $D_m$ contains parabolic
  cylinders with sufficiently smooth boundary such as the situation
  considered in \cite{du:12:ple} corresponding to the example on the
  right in Figure~\ref{fig:D_m}; see also Example~\ref{ex:du-peng}.
\end{remark}
Based on Proposition~\ref{prop:lower-bound} we show that $U_\infty(t,s)$
is has some nice properties for all $0\leq s<t\leq T$ if $m$ satisfies
certain conditions.
\begin{assumption}
  \label{ass:m-good}
  Let $m\in L^\infty(\Omega\times [0,T])$ and assume that the support of
  $m$ is topologically regular, that is, \eqref{eq:m-top-reg} is
  satisfied.  We define the sets $D_m$ and $\Omega_t$ as in
  \eqref{eq:D_m} and \eqref{eq:Omega_t} respectively.  Assume that
  $\Omega_t\neq\emptyset$ for every $t\in [0,T]$. Suppose that for every
  pair of points $y\in\Omega_0$ and $x\in\Omega_t$ with $t\in(0,T]$
  there exist a continuous function $\varphi\colon[0,t]\to\Omega$ with
  $\varphi(0)=y$, $\varphi(1)=x$ and such that $(\varphi(\tau),\tau)\in
  D_m$ for all $\tau\in[0,t]$; see Figure~\ref{fig:D_m}.
\end{assumption}
\begin{remark}
  \label{rem:maxprinc}
  (a) The condition about the existence of the curve $\varphi$ in
  Assumption~\ref{ass:m-good} is related to the condition on
  non-cylindrical regions in the parabolic maximum principle. The
  condition for the validity of the maximum principle is that the point
  is to be reached by a continuous path that only goes ``horizontal'' or
  ``upwards'', that is, ``forward'' in time; see
  \cite[p169]{protter:84:mpd} or \cite{friedman:58:rmp},
  where also a counter example is shown if the condition is violated. As
  a consequence the limit problem is well behaved in the sense that the
  parabolic maximum principle is valid for the non-cylindrical domain
  $D_m$ and hence there are uniqueness theorems.

  (b) Our condition also guarantees that $D_m$ is connected. If $D_m$ is
  not connected, we apply our arguments to every connected
  component. Examples are shown in Figure~\ref{fig:D_m}.

  (c) The diagram on the right in Figure~\ref{fig:D_m} is the special
  situation considered in \cite{alvarez:14:qac,du:12:ple}, where $T^*$
  is as in these references.
\end{remark}
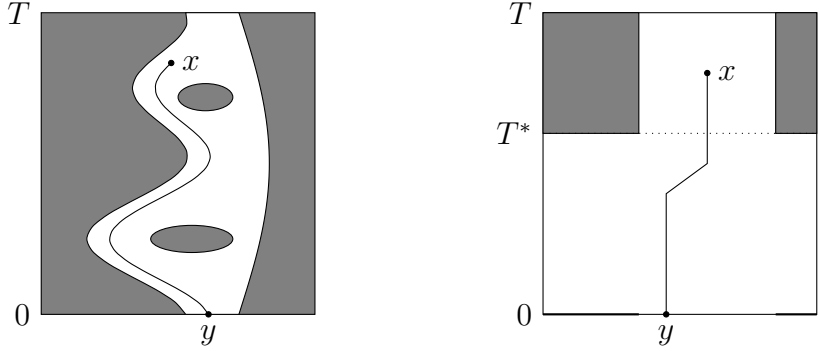
\begin{figure}[ht]
  \centering
  \begin{tikzpicture}
    \pgfmathsetmacro\T{4}
    \pgfmathsetmacro\w{1.8}
    \draw[fill=gray] (-\w,0) node[left] {$0$} -- (\w,0) -- (\w,\T) --
    (-\w,\T) node[left] {$T$} -- cycle;
    \begin{scope}[domain=0:180,smooth]
      \draw[fill=white] plot ({.4*sin(\x)+.8},{\T*\x/180})
      -- plot ({-.4+.5*cos(4*\x)+.3*sin(2*\x)},{\T*(180-\x)/180}) -- cycle;%
    \end{scope}
    \draw[fill=gray] (.1*\w,.25*\T) circle [x radius={.3*\w},y
    radius=.1*\w];%
    \draw[fill=gray] (.2*\w,.72*\T) circle [x radius={.2*\w},y
    radius=.1*\w];%
    \draw[domain=30:180,smooth] plot
    ({-.1+.5*cos(4*\x)+.3*sin(2*\x)},{\T*(180-\x)/180});%
    \draw[fill=black] (.4,0) circle (1pt) node[below] {$y$};%
    \draw[fill=black] ({-.1+.5*cos(120)+.3*sin(60)},{\T*150/180}) circle
    (1pt) node[right] {$x$};%
  \end{tikzpicture}
  \hfil
  \begin{tikzpicture}
    \pgfmathsetmacro\T{4}
    \pgfmathsetmacro\w{1.8}
    \draw (-\w,0) node[left] {$0$} -- (\w,0) -- (\w,\T) --
    (-\w,\T) node[left] {$T$} -- cycle;
    \begin{scope}
      \draw[fill=gray] (-\w,.6*\T) -| (-.3*\w,\T) -- (-\w,\T) -- cycle;%
      \draw[fill=gray] (\w,.6*\T) -| (.7*\w,\T) -- (\w,\T) -- cycle;%
    \end{scope}
    \draw[thick] (-\w,0) -- (-.3*\w,0);%
    \draw[thick] (\w,0) -- (.7*\w,0);%
    \draw[dotted] (-\w,.6*\T) node[left] {$T^*$} -- (\w,.6*\T);%
    \draw (-.1*\w,0) -- (-.1*\w,.4*\T) -- (.2*\w,.5*\T) --(.2*\w,.8*\T);%
    \draw[fill=black] (-.1*\w,0) circle (1pt) node[below] {$y$};%
    \draw[fill=black] (.2*\w,.8*\T) circle (1pt) node[right] {$x$};%
  \end{tikzpicture}
  \caption{Parabolic cylinder with $\supp m$ (shaded) and $D_m$ with
    $0\leq t\leq T$.}
  \label{fig:D_m}
\end{figure}
\begin{theorem}
  \label{thm:U-limit-non-trivial}
  Suppose that $m$ satisfies Assumption~\ref{ass:m-good} and let
  $k_\infty$ be the kernel of the limit evolution system $U_\infty$ as
  in Theorem~\ref{thm:U-limit}. Then $k_\infty(x,y,t,0)>0$ for all
  $t\in(0,T]$ and all $(x,y)\in\Omega_t\times\Omega_0$.
\end{theorem}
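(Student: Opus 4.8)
The plan is to convert the global path condition of Assumption~\ref{ass:m-good} into a finite chain of space-time boxes on which $m$ vanishes, to bound $k_\infty$ from below on each box by a Dirichlet heat kernel using Proposition~\ref{prop:lower-bound}, and then to splice these lower bounds together through the Chapman--Kolmogorov identity for $k_\infty$. First I would fix $t\in(0,T]$, $y\in\Omega_0$ and $x\in\Omega_t$, and choose a continuous $\varphi\colon[0,t]\to\Omega$ with $\varphi(0)=y$, $\varphi(t)=x$ and $(\varphi(\tau),\tau)\in D_m$ for all $\tau$, as provided by Assumption~\ref{ass:m-good}. Since $\supp(m)$ is closed, $D_m$ is relatively open in $\Omega\times[0,T]$, so for each $\tau$ there is an open ball $W_\tau\subseteq\Omega$ centred at $\varphi(\tau)$ and $\rho_\tau>0$ with $W_\tau\times\bigl((\tau-\rho_\tau,\tau+\rho_\tau)\cap[0,T]\bigr)\subseteq D_m$; shrinking $\rho_\tau$ by continuity of $\varphi$ I may also assume $\varphi(\sigma)\in W_\tau$ whenever $|\sigma-\tau|<\rho_\tau$, and on each such box $m=0$ almost everywhere. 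A Lebesgue-number argument for the resulting open cover of the compact interval $[0,t]$ then yields a partition $0=s_0<s_1<\dots<s_k=t$ and open balls $\Omega^{(1)},\dots,\Omega^{(k)}\subseteq\Omega$ such that $m=0$ a.e.\ on $\Omega^{(j)}\times(s_{j-1},s_j)$ and $\varphi([s_{j-1},s_j])\subseteq\Omega^{(j)}$ for every $j$. In particular $y\in\Omega^{(1)}$, $x\in\Omega^{(k)}$, and $\varphi(s_j)\in\Omega^{(j)}\cap\Omega^{(j+1)}$, so consecutive balls overlap.

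Next I would record the Chapman--Kolmogorov identity for the limit kernel. Either by taking kernels in the identity $U_\infty(t,s)=U_\infty(t,\tau)U_\infty(\tau,s)$ from Theorem~\ref{thm:U-limit}, or by passing to the limit via dominated convergence (using that $k_\lambda\downarrow k_\infty$ and that the integrand is dominated by the integrable product of the Gaussian bounds of Theorem~\ref{thm:unif-est}) in the corresponding identity for $k_\lambda$, one obtains
\begin{equation*}
  k_\infty(x,y,t,s)=\int_\Omega k_\infty(x,z,t,\tau)k_\infty(z,y,\tau,s)\,dz,
  \qquad 0\le s<\tau<t\le T.
\end{equation*}
Iterating this along the partition $s_0<s_1<\dots<s_k$ yields, with the obvious convention when $k=1$,
\begin{equation*}
  k_\infty(x,y,t,0)=\int_{\Omega^{k-1}}
  k_\infty(x,z_{k-1},s_k,s_{k-1})\,
  \Bigl(\prod_{j=2}^{k-1}k_\infty(z_j,z_{j-1},s_j,s_{j-1})\Bigr)\,
  k_\infty(z_1,y,s_1,s_0)\,dz_1\cdots dz_{k-1}.
\end{equation*}

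Then I would extract positivity. Applying Proposition~\ref{prop:lower-bound} to the slab $\Omega^{(j)}\times(s_{j-1},s_j)$, with $\Omega^{(j)}$ playing the role of $\Omega_0$ there, shows that $k_\infty(p,q,s_j,s_{j-1})>0$ for all $p,q\in\Omega^{(j)}$. Restricting the integral above to the non-empty open set $(\Omega^{(1)}\cap\Omega^{(2)})\times\cdots\times(\Omega^{(k-1)}\cap\Omega^{(k)})$ and using that for such $(z_1,\dots,z_{k-1})$ one has $z_{j-1},z_j\in\Omega^{(j)}$ for $2\le j\le k-1$, $z_1,y\in\Omega^{(1)}$ and $x,z_{k-1}\in\Omega^{(k)}$, every factor of the integrand is strictly positive on that set; since the full integrand is non-negative, this forces $k_\infty(x,y,t,0)>0$. (When $k=1$ the conclusion is immediate from Proposition~\ref{prop:lower-bound} applied to $\Omega^{(1)}$, since then $x,y\in\Omega^{(1)}$ and $0<t=s_1$.)

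I expect the main obstacle to be organisational rather than analytic: turning the single global curve $\varphi$ into a finite overlapping chain of boxes in which the two endpoints and all the overlaps lie in the correct balls, and then keeping the bookkeeping straight while threading strict positivity through the iterated Chapman--Kolmogorov formula. The genuine analytic content --- the local comparison with a Dirichlet heat kernel and the uniform Gaussian bounds that permit passing to the limit $\lambda\to\infty$ --- is already supplied by Proposition~\ref{prop:lower-bound} and Theorem~\ref{thm:unif-est}.
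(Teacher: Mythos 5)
Your argument is correct, but it takes a genuinely different route from the paper's. The paper fixes the curve $\varphi$ from Assumption~\ref{ass:m-good} and runs a connectedness argument: it shows that the set $I=\{s\in(0,t]\colon k_\infty(\varphi(\tau),y,\tau,0)>0\text{ for all }\tau\in[0,s]\}$ is non-empty, open and closed in $(0,t]$, the ``closed'' step being a single application of the Chapman--Kolmogorov identity \eqref{eq:U-limit-property} combined with Proposition~\ref{prop:lower-bound} on one small box around $(\varphi(s),s)$. You replace this by a compactness argument: a Lebesgue-number covering of $[0,t]$ produces a finite overlapping chain of boxes on which $m$ vanishes, and positivity is threaded through an iterated Chapman--Kolmogorov formula. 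Both proofs consume exactly the same two inputs (Proposition~\ref{prop:lower-bound} and \eqref{eq:U-limit-property}), but they differ in one substantive respect: the paper's version leans on the continuity of $\tau\mapsto k_\infty(\varphi(\tau),y,\tau,0)$, its \eqref{eq:kernel-continuous}, which is not entirely innocent since $k_\infty$ is a decreasing pointwise limit of kernels and such limits need not inherit continuity; your version needs only that $k_\infty$ is non-negative and measurable and is pointwise positive on the non-empty open product set $(\Omega^{(1)}\cap\Omega^{(2)})\times\cdots\times(\Omega^{(k-1)}\cap\Omega^{(k)})$, which forces the iterated integral to be positive. In that sense your route is slightly more robust and elementary, at the cost of the covering and indexing bookkeeping you already flagged; the paper's route is shorter to write down once the continuity of the limit kernel is granted.
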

\begin{proof}
  Fix $(y,0),(x,t)\in D_m$ with $0<t\leq T$ and let
  $\varphi\colon[0,t]\to\Omega$ be as in Assumption~\ref{ass:m-good}. We
  consider the set
  \begin{displaymath}
    I:=\bigl\{s\in(0,t]\colon 
    \text{$k_\infty(\varphi(\tau),y,\tau,0)>0$
      for all $\tau\in[0,s]$}\bigr\}.
  \end{displaymath}
  We need to show that $I=(0,t]$. Because $(0,t]$ is connected it is
  sufficient to show that $I$ is non-empty, open and closed in
  $(0,t]$. To do so we use the fact that the function
  \begin{equation}
    \label{eq:kernel-continuous}
    (0,t]\to[0,\infty),\,\tau\mapsto k_\infty(\varphi(\tau),y,\tau,0)
    \qquad\text{is continuous.}
  \end{equation}
  Further note that if $s\in I$, then $(0,s]\subseteq I$ by definition
  of $I$.

  We first show that $I$ is non-empty. As $(y,0)\in D_m$ and $D_m$ is
  open there exists an open neighbourhood $V_0\subseteq\Omega$ of $y$
  and an interval $[0,s_0]$ such that $V_0\times [0,s_0]\subseteq D_m$.
  Proposition~\ref{prop:lower-bound} implies that
  $k_\infty(z,y,\tau,0)>0$ for all $(z,\tau)\in V_0\times J_0$.  By
  \eqref{eq:kernel-continuous} there exists $\tau_0\in (0,s_0]$ such
  that $\varphi(\tau)\in V$ for all $\tau\in(0,\tau_0]$ and hence
  $k_\infty(\varphi(\tau),y,\tau,0)>0$ for all $\tau\in
  (0,\tau_0]$. Hence, $\tau_0\in I$ and so $I\neq\emptyset$.

  We next show that $I$ is open. If $s\in I$, then
  $k(\varphi(\tau),y,\tau,0)>0$ for all $\tau\in(0,s]$. In particular,
  $k(\varphi(s),y,s,0)>0$. By \eqref{eq:kernel-continuous} there exists
  $s_1>s$ so that $k(\varphi(\tau),y,\tau,0)>0$ for all
  $\tau\in(0,s_1]$. Hence $I$ is open.

  We finally show that $I$ is closed. If $s>0$ is in the closure of $I$,
  then $k_\infty(\varphi(\tau),y,\tau,0)>0$ for all
  $\tau\in(0,s)$. Because $D_m$ is open there exists a non-empty open
  set $V\subseteq\Omega$ and an open interval $J\subseteq\mathbb R$ such
  that $\bigl(\varphi(s),s\bigr)\in V\times J\subseteq D_m$. Now
  Proposition~\ref{prop:lower-bound} implies that
  $k_\infty(z,w,s,\tau)>0$ for all $z,w\in V$ and all $\tau\in J$ with
  $\tau<s$. Due to \eqref{eq:kernel-continuous} we can choose $\tau_0\in
  J$ with $\tau_0<s$ such that $\varphi(\tau_0)\in V$. Then, by
  \eqref{eq:U-limit-property}
  \begin{equation}
    \label{eq:k-non-trivial}
    k_\infty(x,y,s,0)
    =\int_{\Omega_s}k_\infty(x,z,s,\tau_0)k_\infty(z,y,\tau_0,0)\,dz.
  \end{equation}
  We have chosen $V$ and $J$ such that $k_\infty(x,z,s,\tau_0)>0$ for
  all $z\in V$. In particular $k_\infty(x,z,s,\tau_0)>0$ for $z$ in a
  neighbourhood of $\varphi(\tau_0)$. By the continuity of
  $k_\infty(x,z,s,\tau_0)$ as a function of $z$ we also deduce that
  $k_\infty(x,z,s,\tau_0)>0$ for all $z$ in a neighbourhood of
  $\varphi(\tau_0)$. As $k_\infty\geq 0$ we conclude from
  \eqref{eq:k-non-trivial} that $k_\infty(x,y,\tau_0,0)>0$. Hence, $s\in
  I$ and thus $I$ is closed.
\end{proof}

We complete this section by reviewing the special case of $m$ treated in
\cite{du:12:ple}.

\begin{example}
  \label{ex:du-peng}
  The special case considered in \cite{du:12:ple} is $m$ of the form
  $m(x,t)=p(x)q(t)$ with $\supp(p)=\Omega\setminus U_0$ for some
  non-empty open set $U_0$ and $\supp q=[T^*,T]$ for some $T^*\in(0,T)$,
  or the slightly more general situation given in
  \cite[condition~(3.2)]{du:12:ple}. The situation is depicted in
  Figure~\ref{fig:D_m} on the right. We only assume that $m\in
  L^\infty\bigl(\Omega\times(0,T)\bigr)$. The set $D_m$ consists of two
  cylindrical regions: $\Omega\times(0,T^*)$ and $U_0\times
  (T^*,T)$. Let now $r\geq 2$ and $f\in
  L^r\bigl((0,T),L^p(\Omega)\bigr)$ with $p$ satisfying
  \eqref{eq:rp-condition}. If $\Omega$ and $U_0$ are regular enough,
  such as in \cite{du:12:ple}, then standard regularity theory for
  parabolic equations imply that the convergence is actually uniform in
  $\Omega\times[\varepsilon,T^*-\varepsilon]$ and (or) in
  $U_0\times[T^*+\varepsilon,T]$. According to
  \cite[Theorem~III.10.1]{ladyzenskaya:68:lqp} the H\"older estimates in
  Theorem~\ref{thm:compact-holder} are not just local, but global in the
  above cylinders for every $\varepsilon>0$ sufficiently small. A
  sufficient condition is that $\Omega$ and (or) $U_0$ satisfies a
  uniform exterior cone condition. Such a condition is satisfied in
  \cite{du:12:ple}.
\end{example}

\section{The periodic-parabolic eigenvalue problem}
\label{sec:periodic}
In this section we study the principal eigenvalue of the
periodic-parabolic eigenvalue problem \eqref{eq:pevp} as a function of
$\lambda$. In particular we assume throughout that the coefficients of
$(\mathcal A(t),\mathcal B(t))$ as well as the weight function $m(x,t)$
are $T$-periodic as a function of time $t\in\mathbb R$.

It is well known that there is a one-to-one correspondence between the
real eigenvalues and corresponding eigenfunctions of \eqref{eq:pevp} and
the positive eigenvalues of $U_\lambda(T,0)$ and their eigenfunctions;
see \cite[Prop~14.4]{hess:91:ppb}. Indeed, the following lemma is easily
checked, see also \cite{daners:92:aee,daners:00:epp,hess:91:ppb}.

\begin{lemma}
  \label{lem:1}
  Let the assumptions of Section~\ref{sec:weak-solutions} be satisfied. Then
  $\beta(\lambda)\in\mathbb R$ is an eigenvalue of $U_\lambda(T,0)$ with
  eigenfunction $w_\lambda\in L^2(\Omega)$ if and only if
  \begin{displaymath}
    \mu(\lambda):=-\frac{1}{T}\log\bigl(\beta(\lambda)\bigr)
  \end{displaymath}
  is a periodic-parabolic eigenvalue of \eqref{eq:pevp} with
  $T$-periodic eigenfunction $u_\lambda\in C\bigl(\mathbb
  R,L^2(\Omega)\bigr)$ given by
  \begin{displaymath}
    u_\lambda(t):=e^{\mu(\lambda)t}U_\lambda(t,0)w_\lambda
  \end{displaymath}
  for all $t\in\mathbb R$.
\end{lemma}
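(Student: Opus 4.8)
The plan is to verify the equivalence by a direct computation using the
defining properties of the evolution operator $U_\lambda(t,s)$ together
with the fact that the coefficients of $(\mathcal A(t),\mathcal B(t))$
and the weight $m$ are $T$-periodic in $t$. The key structural fact I
would use is the two-parameter semigroup (evolution) property
$U_\lambda(t,s)=U_\lambda(t,\tau)U_\lambda(\tau,s)$ for $0\le s\le
\tau\le t$, extended to all of $\mathbb R$ by periodicity, namely
$U_\lambda(t+T,s+T)=U_\lambda(t,s)$; this last identity is exactly where
the $T$-periodicity of the data enters, since both sides solve the same
initial value problem on the shifted interval.

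First I would treat the ``only if'' direction. Suppose $U_\lambda(T,0)w_\lambda=\beta(\lambda)w_\lambda$ with $\beta(\lambda)>0$ (positivity of $\beta$ is needed for $\log\beta$ to make sense and is automatic once we know $U_\lambda(T,0)$ is positive irreducible, but here it is part of the hypothesis that $\beta(\lambda)\in\mathbb R$ is taken together with $\mu(\lambda)=-\tfrac1T\log\beta(\lambda)$ being well-defined). Set $\mu:=\mu(\lambda)$ and define $u_\lambda(t):=e^{\mu t}U_\lambda(t,0)w_\lambda$ for $t\ge 0$, extended to $t\in\mathbb R$ by $T$-periodicity. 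I would check three things: (a) $u_\lambda$ solves the parabolic equation $\partial_t u+\mathcal A(t)u+\lambda m u=\mu u$ with the boundary conditions — this is immediate because $v(t):=U_\lambda(t,0)w_\lambda$ solves $\partial_t v+\mathcal A(t)v+\lambda m v=0$ by definition of the evolution operator, and multiplying by the scalar $e^{\mu t}$ produces precisely the extra right-hand term $\mu u_\lambda$; (b) $u_\lambda$ is $T$-periodic and continuous — continuity in $L^2(\Omega)$ on $[0,T]$ comes from \eqref{eq:Wcontinuous} (or from $v\in C([0,T],L^p)$ established in Section~\ref{sec:Lp-solutions}), and the periodicity on $[0,T]$ reduces to checking $u_\lambda(T)=u_\lambda(0)$, i.e. $e^{\mu T}U_\lambda(T,0)w_\lambda=w_\lambda$, which holds since $e^{\mu T}=\beta(\lambda)^{-1}$ and $U_\lambda(T,0)w_\lambda=\beta(\lambda)w_\lambda$; this single matching condition together with the evolution/periodicity identity $U_\lambda(t+T,s+T)=U_\lambda(t,s)$ propagates to give periodicity for all $t\in\mathbb R$; (c) $u_\lambda\not\equiv 0$ since $w_\lambda\ne 0$ and $U_\lambda(t,0)$ is injective on $L^2$ (e.g. because its kernel is strictly positive by Theorem~\ref{thm:unif-est}). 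Hence $\mu(\lambda)$ is a periodic-parabolic eigenvalue with eigenfunction $u_\lambda$.

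Conversely, suppose $\mu=\mu(\lambda)$ is a periodic-parabolic eigenvalue of \eqref{eq:pevp} with $T$-periodic eigenfunction $u\in C(\mathbb R,L^2(\Omega))$. Then $w(t):=e^{-\mu t}u(t)$ solves $\partial_t w+\mathcal A(t)w+\lambda m w=0$ with $\mathcal B(t)w=0$, so by uniqueness of the initial value problem (Section~\ref{sec:weak-solutions}/\ref{sec:Lp-solutions}) we have $w(t)=U_\lambda(t,0)w(0)$ for $t\in[0,T]$. Evaluating at $t=T$ and using $T$-periodicity of $u$, hence $w(T)=e^{-\mu T}u(T)=e^{-\mu T}u(0)=e^{-\mu T}w(0)$, gives $U_\lambda(T,0)w(0)=e^{-\mu T}w(0)$; setting $w_\lambda:=w(0)=u(0)\ne 0$ and $\beta(\lambda):=e^{-\mu T}>0$ we recover $\mu=-\tfrac1T\log\beta(\lambda)$ and the claimed formula $u_\lambda(t)=e^{\mu t}U_\lambda(t,0)w_\lambda$. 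I do not expect a serious obstacle here; the only point requiring a little care is the bookkeeping that lets one pass from the single equality $u_\lambda(T)=u_\lambda(0)$ on one period to full $T$-periodicity on $\mathbb R$, which is where the periodicity of the coefficients — encoded in $U_\lambda(t+T,s+T)=U_\lambda(t,s)$ — must be invoked; everything else is a routine translation between the PDE formulation and the evolution-operator formulation, exactly as in \cite{daners:92:aee,daners:00:epp,hess:91:ppb}.
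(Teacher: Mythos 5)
Your proposal is correct and is exactly the standard verification the paper has in mind: the paper gives no proof of Lemma~\ref{lem:1}, stating only that it ``is easily checked'' and deferring to \cite{hess:91:ppb,daners:92:aee,daners:00:epp}, and your computation (conjugating by $e^{\pm\mu t}$, matching $u_\lambda(T)=u_\lambda(0)$ via $e^{\mu T}=\beta(\lambda)^{-1}$, and extending by $U_\lambda(t+T,s+T)=U_\lambda(t,s)$) is precisely that argument. One small remark: the appeal to injectivity of $U_\lambda(t,0)$ in step (c) is unnecessary (and not immediate from kernel positivity); non-triviality already follows from $u_\lambda(0)=w_\lambda\neq 0$.
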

We next want show that under Assumption~\ref{ass:m-good} the limit
problem as $\lambda\to\infty$ has a periodic-parabolic principal
eigenvalue $\mu_\infty$ that can be obtained as the limit of
$\mu(\lambda)$. We also show that the corresponding eigenfunctions can
be chosen so that they converge in $L^p(\Omega\times(0,T))$ for $1\leq
p<\infty$ and in $C^\beta(D_m)$, that is, locally in a H\"older norm. As
mentioned already the theorem generalises and simplifies a results in
\cite[Theorem~3.3 and~3.4]{du:12:ple}, where a very special case of
Assumption~\ref{ass:m-good} is covered.
\begin{theorem}
  \label{thm:pp-eigenvalue}
  Suppose $m\in L^\infty\bigl(\Omega\times\mathbb R\bigr)$ is
  $T$-periodic and satisfies Assumption~\ref{ass:m-good}. Let
  $\mu(\lambda)$ be the principal eigenvalue of the periodic-parabolic
  problem \eqref{eq:pevp}. Then $\mu(\lambda)$ is an increasing function
  of $\lambda>0$ and
  \begin{equation}
    \label{eq:mu-infty}
    \mu_\infty:=\lim_{\lambda\to\infty}\mu(\lambda)\in\mathbb R
  \end{equation}
  exists. Furthermore, we can choose eigenfunctions $u_\lambda\in
  L^\infty\bigl(\Omega\times(0,T)\bigr)$ of \eqref{eq:pevp} such that
  \begin{equation}
    \label{eq:pp-ev}
    u_\infty(t)
    =\lim_{\lambda\to\infty}u_\lambda(t)
    =e^{\mu_\infty t}U_\lambda(t,0)u_\infty(0)
  \end{equation}
  in $L^q\bigl(\Omega)$ for all $t\in\mathbb R$ whenever $1\leq
  q<\infty$. Moreover, for every compact subset $K\subseteq
  D_m\cap(\Omega\times[0,T])$ there exists $\beta\in(0,1)$ such that
  $u_\lambda\to u_\infty$ in $C^\beta(K)$ as $\lambda\to\infty$.
  Finally, $\mu_\infty$ is the unique principal eigenvalue of
  \begin{equation}
    \label{eq:evp-limit}
    \begin{aligned}
      \frac{\partial u}{\partial t}+\mathcal A(t)u
      &=\mu_\infty u
      &&\text{in $D_m$,}\\
      \mathcal B(t)u&=0
      &&\text{on $\partial D_m\cap\bigl(\partial\Omega\times(0,T)\bigr)$,}\\
      u(x,0)&=u(x,T)&&\text{on $\Omega_0$,}
    \end{aligned}
  \end{equation}
  and $u_\infty$ is the unique positive eigenfunction up to scalar
  multiples. If $\Omega_t$ is regular for all $t\in[0,T]$ as in
  Corollary~\ref{cor:limit-problem}, then $u_\infty$ satisfies Dirichlet
  boundary conditions on $\partial\Omega_t\cap\Omega$ for almost all
  $t\in\mathbb R$.
\end{theorem}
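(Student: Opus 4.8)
The plan is to transfer everything through the correspondence of Lemma~\ref{lem:1} between periodic-parabolic eigenvalues $\mu(\lambda)$ of \eqref{eq:pevp} and the spectral radius of the compact, positive, irreducible operator $U_\lambda(T,0)$ on $L^p(\Omega)$. By Krein--Rutman, $\beta(\lambda):=\spr(U_\lambda(T,0))>0$ is a simple eigenvalue with a strictly positive eigenfunction $w_\lambda$, and $\mu(\lambda)=-\frac1T\log\beta(\lambda)$. First I would establish monotonicity of $\mu(\lambda)$: since $U_\lambda(T,0)u_0$ is decreasing in $\lambda$ for $u_0\geq 0$ by Theorem~\ref{thm:unif-est}, a standard comparison argument for the spectral radius of positive operators (e.g.\ via $\beta(\lambda)=\lim_n\|U_\lambda(T,0)^n u_0\|^{1/n}$ for a fixed strictly positive $u_0$, or via the Collatz--Wielandt characterisation) gives $\beta(\lambda_2)\leq\beta(\lambda_1)$ for $\lambda_1\leq\lambda_2$, hence $\mu$ is increasing. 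To get the limit \eqref{eq:mu-infty}, I would show $\mu(\lambda)$ is bounded above: Proposition~\ref{prop:lower-bound} together with Assumption~\ref{ass:m-good} (via Theorem~\ref{thm:U-limit-non-trivial}) shows $k_\infty(x,y,T,0)>0$ on $\Omega_T\times\Omega_0$, so $U_\infty(T,0)\neq 0$; then $\beta(\lambda)\geq \beta_\infty:=\spr(U_\infty(T,0))>0$ gives an upper bound on $\mu(\lambda)$, and the monotone limit $\mu_\infty\in\mathbb R$ exists.

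**Convergence of eigenfunctions.** Next I would normalise, say $\|w_\lambda\|_p=1$, and pass to the limit. By Theorem~\ref{thm:U-limit}, $U_\lambda(T,0)\to U_\infty(T,0)$ in $\mathcal L(L^p,L^q)$; by Theorem~\ref{thm:compact-Lp} (and Theorem~\ref{thm:compact-holder} for the H\"older statement) the eigenfunction equation $U_\lambda(T,0)w_\lambda=\beta(\lambda)w_\lambda$ passes to the limit along a subsequence: $w_\lambda\rightharpoonup w_\infty$ weakly in $L^p$, and using compactness of $U_\infty(T,0)$ plus $\beta(\lambda)\to\beta_\infty=e^{-\mu_\infty T}$ one upgrades this to $w_\lambda\to w_\infty$ strongly with $U_\infty(T,0)w_\infty=\beta_\infty w_\infty$. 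Here I need $\beta_\infty>0$, already established, so that the limit equation is non-degenerate and $w_\infty\neq 0$ (the normalisation is preserved by strong convergence). Then $w_\infty\geq 0$ and, being a non-zero fixed point of the irreducible-on-$D_m$ operator $U_\infty(T,0)$, is the principal eigenfunction. Setting $u_\lambda(t):=e^{\mu(\lambda)t}U_\lambda(t,0)w_\lambda$ and $u_\infty(t):=e^{\mu_\infty t}U_\infty(t,0)w_\infty$, the convergence $u_\lambda(t)\to u_\infty(t)$ in $L^q(\Omega)$ for each $t$, and in $C^\beta(K)$ on compact $K\subseteq D_m$, follows from Theorems~\ref{thm:compact-Lp} and~\ref{thm:compact-holder} applied with $f_n=0$ and initial data $w_{\lambda_n}\to w_\infty$; the $L^p(\Omega\times(0,T))$ convergence then follows from the uniform $L^p$-$L^q$ bounds and dominated convergence. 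The identity \eqref{eq:pp-ev} with $U_\lambda$ in the last expression is simply the semigroup/evolution property $u_\infty(t)=e^{\mu_\infty t}U_\infty(t,0)u_\infty(0)$ (the subscript should be $\infty$), and $T$-periodicity of $u_\infty$ is inherited from $u_\lambda$ in the limit.

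**Characterisation of $\mu_\infty$ and uniqueness.** By Corollary~\ref{cor:limit-problem}(i), $u_\infty$ is a weak solution of the parabolic problem on $D_m$ with right-hand side $\mu_\infty u_\infty$, and the periodicity $u_\infty(0)=u_\infty(T)$ on $\Omega_0=\Omega_T$ (periodicity of $m$) shows $\mu_\infty$ is a principal periodic-parabolic eigenvalue of \eqref{eq:evp-limit}; the Dirichlet boundary statement on $\partial\Omega_t\cap\Omega$ is Corollary~\ref{cor:limit-problem}(ii) under the stability hypothesis. For uniqueness, I would again use the operator picture: any principal eigenvalue $\tilde\mu$ of \eqref{eq:evp-limit} corresponds to a positive eigenfunction $\tilde w$ of $U_\infty(T,0)$ with eigenvalue $e^{-\tilde\mu T}>0$; since $U_\infty(T,0)$ is compact, positive, and irreducible on $L^p(\Omega_0)$ (this irreducibility is where Theorem~\ref{thm:U-limit-non-trivial} and Assumption~\ref{ass:m-good} are essential — the strict positivity of $k_\infty$ makes $U_\infty(T,0)$ irreducible), Krein--Rutman forces $e^{-\tilde\mu T}=\spr(U_\infty(T,0))=\beta_\infty$, hence $\tilde\mu=\mu_\infty$, and $\tilde w$ is a scalar multiple of $w_\infty$.

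**Main obstacle.** The delicate point is the strong (not merely weak) convergence $w_\lambda\to w_\infty$ together with $w_\infty\neq 0$: a priori the weak limit of the normalised eigenfunctions could vanish if mass escapes, and without $\beta_\infty>0$ one cannot exclude a degenerate limit. The resolution is the chain Proposition~\ref{prop:lower-bound} $\Rightarrow$ Theorem~\ref{thm:U-limit-non-trivial} $\Rightarrow$ $\beta_\infty>0$ and irreducibility of $U_\infty(T,0)$, which is precisely where Assumption~\ref{ass:m-good} (the horizontal/upward path condition, i.e.\ the parabolic-maximum-principle geometry) does its work; then compactness of $U_\infty(T,0)$ turns $w_\lambda=\beta(\lambda)^{-1}U_\lambda(T,0)w_\lambda$, which is close to $\beta_\infty^{-1}U_\infty(T,0)w_\lambda$, into a relatively compact sequence with non-zero limit. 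A secondary technical matter is the interchange of the $t$-limit with the eigenfunction construction — choosing the normalisation at $t=0$ and using the uniform kernel bounds of Theorem~\ref{thm:unif-est} to control $u_\lambda(t)$ uniformly in $\lambda$ on $[\varepsilon,T]$, then extending to $t=0$ by periodicity.
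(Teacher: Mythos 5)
Your proposal is correct and follows essentially the same route as the paper: the correspondence of Lemma~\ref{lem:1}, monotonicity of $U_\lambda(t,s)$ in $\lambda$ for the monotonicity of $\mu(\lambda)$, norm convergence and compactness of $U_\infty(T,0)$ from Theorem~\ref{thm:U-limit}, irreducibility via the kernel positivity of Theorem~\ref{thm:U-limit-non-trivial} together with de~Pagter's theorem to get $\spr\bigl(U_\infty(T,0)\bigr)>0$, and Theorems~\ref{thm:compact-Lp} and~\ref{thm:compact-holder} for the convergence of the eigenfunctions. The only divergence is that where you spell out $r(\lambda)\to\spr\bigl(U_\infty(T,0)\bigr)$ and the strong convergence $w_\lambda\to w_\infty\neq 0$ by a direct normalisation--compactness argument (identifying the limit eigenvalue via irreducibility), the paper simply cites domain-perturbation results from \cite{daners:08:dpl}; you also correctly note that $U_\lambda$ in \eqref{eq:pp-ev} should read $U_\infty$.
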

\begin{proof}
  If $\mu(\lambda)$ is the principal eigenvalue of  \eqref{eq:pevp},
  then by Lemma~\ref{lem:1} we have
  \begin{displaymath}
    r(\lambda):=\spr\bigl(U_\lambda(T,0)\bigr)=e^{-\mu(\lambda)T}.
  \end{displaymath}
  We have proved that $U_\lambda(t,s)$ is decreasing as a function of
  $\lambda$ and therefore standard theory of positive operators on the
  Banach lattice $L^2(\Omega)$ implies that $r(\lambda)$ is decreasing
  in $\lambda$. Hence $\mu(\lambda)$ is an increasing function of
  $\lambda$.

  We know from Theorem~\ref{thm:U-limit} that $U_\lambda(T,0)\to
  U_\infty(T,0)$ in $\mathcal L\bigl(L^2(\Omega)\bigr)$, and that
  $U_\infty(T,0)$ is compact.  We have proved in
  Theorem~\ref{thm:U-limit-non-trivial} that $U_\infty(T,0)$ has a
  kernel $k_\infty$ with the property that $k_\infty(x,y,T,0)>0$ for all
  $(x,y)\in\Omega_T\times\Omega_0$. Hence, if $w\in L^2(\Omega_0)$ is
  non-negative with $w>0$ on a set of positive measure, then
  \begin{displaymath}
    \bigl(U_\infty(T,0)w\bigr)(x)
    =\int_{\Omega_0}k_\infty(x,y,T,0)w(y)\,dy>0
  \end{displaymath}
  for all $x\in\Omega_T$.  By the $T$-periodicity we have
  $\Omega_T=\Omega_0$ and therefore $\bigl(U_\infty(T,0)w\bigr)(x)>0$
  for all $x\in\Omega_0$ if $w>0$ on $\Omega_0$. Hence, the restriction
  of $U_\infty(T,0)$ to $L^2(\Omega_0)$ is a compact positive and
  irreducible operator on $L^2(\Omega_0)$. Therefore, the spectral
  radius $r_\infty:=\spr\bigl(U_\infty(T,0)\bigr)$ is an algebraically
  simple eigenvalue of $U_\infty(T,0)$ and $r_\infty>0$ by a
  generalisation of the Krein-Rutman theorem due to
  \cite{depagter:86:ico}.

  By using perturbation results involving extensions and restrictions to
  sub-domains such as \cite[Section~4.3]{daners:08:dpl}, we have
  \begin{equation}
    \label{eq:1}
    r_\infty=\lim_{\lambda\to\infty}r(\lambda)
    =\spr\bigl(U_\infty(T,0)\bigr).
  \end{equation}
  Moreover, we can choose eigenfunctions $w_\lambda>0$ of
  $U_\lambda(T,0)$ such that $w_\lambda\to w_\infty$ in $L^2(\Omega)$ as
  $\lambda\to\infty$.  In particular, $w_\infty$ is an eigenfunction of
  $U_\infty(T,0)$ corresponding to the eigenvalue $r_\infty$, and
  \begin{displaymath}
    \mu_\infty=-\frac{1}{T}\log r_\infty<\infty.
  \end{displaymath}
  We know from Lemma~\ref{lem:1} that 
  \begin{displaymath}
    u_\lambda(t)=e^{\mu(\lambda)t}U_\lambda(t,0)w_\lambda,\qquad
    t\in\mathbb R
  \end{displaymath}
  is a positive periodic-parabolic eigenfunction of \eqref{eq:pevp}.  It
  follows from Theorem~\ref{thm:compact-Lp} that
  \begin{displaymath}
    u_\lambda(t)\to u_\infty(t):=e^{\mu_\infty t}U(t,0)w_\infty
  \end{displaymath}
  in $L^2(\Omega)$ for all $t>0$, and hence by the $T$-periodicity for
  all $t\in\mathbb R$. The above argument also implies the uniqueness of
  the periodic-parabolic eigenvalue and eigenfunction up to scalar
  multiples.

  Applying an estimate similar to \eqref{eq:U-est-1} as well as the fact
  that $\mu_\infty\geq\mu(\lambda)$ we see that
  \begin{displaymath}
    \|u_\lambda(t)\|_\infty
    \leq e^{2|\omega|T}e^{\mu_\infty t}T^{-N/4}\|w_\lambda\|_2
  \end{displaymath}
  for all $t\in[T,2T]$. As $w_\lambda$ is bounded in $L^2(\Omega)$ and
  $u_\lambda$ is $T$-periodic it follows that the family of
  periodic-parabolic eigenfunctions $(u_\lambda)$ is bounded in
  $L^\infty\bigl(\mathbb R\times\Omega\bigr)$. Let $K\subseteq
  D_m\cap(\Omega\times[0,T])$ and consider the compact set
  \begin{displaymath}
    \tilde K
    :=\{(x,t+T)\colon (x,t)\in K\}\subseteq D_m\cap(\Omega\times[T,2T]).
  \end{displaymath}
  By Theorem~\ref{thm:compact-holder} there exists $\beta\in(0,1)$ such
  that $u_\lambda\to u_\infty$ in $C^\beta(\tilde K)$ as
  $\lambda\to\infty$. By the $T$-periodicity we also have $u_\lambda\to
  u_\infty$ in $C^\beta(K)$ as $\lambda\to\infty$.  Finally,
  Theorem~\ref{thm:U-limit-non-trivial} and periodicity it is strictly
  positive on $D_m$.
\end{proof}
\begin{remark}
  \label{rem:holder-cont}
  Under stronger assumptions on the regularity of the coefficients such
  as those in Remark~\ref{rem:compact-holder}, for every compact subset
  $K\subseteq D_m$ we have $u_\lambda\to u_\infty$ in
  $C^{2+\beta,1+\beta/2}(K)$ for some $\beta\in(0,1)$. Due to
  Corollary~\ref{cor:limit-problem} we can also deduce H\"older
  regularity of $u_\infty$ up to some parts of $\partial D_m$, but
  depending on geometry and smoothness assumptions on $D_m$. This
  recovers the regularity result in \cite[Theorem~3.3]{du:12:ple}.
\end{remark}
We next make some comparisons to the earlier work in \cite{du:12:ple}.
\begin{example}
  \label{ex:du-peng-pp}
  If we combine the comments in Example~\ref{ex:du-peng} with
  Theorem~\ref{thm:pp-eigenvalue} we can strengthen the convergence
  result in \cite{du:12:ple}. Rather than having local uniform
  convergence of the periodic-parabolic eigenfunction $u_\lambda$ in
  $D_m$ we have \emph{global} uniform convergence of $u_\lambda$ in the
  cylinders $[\varepsilon,T^*-\varepsilon]\times\Omega$ and (or) in
  $[T^*+\varepsilon,T]\times U_0$, depending on the regularity
  assumptions on $\Omega$ and (or) $U_0$. These conditions are satisfied
  in \cite{du:12:ple}.
\end{example}

\section{Optimality of the conditions on the weight function}
\label{sec:optimal}
We note that Assumption~\ref{ass:m-good} is not necessary to guarantee a
solution to the limit problem. Indeed, suppose there exists
$T$-periodic $\tilde{m}\in L^\infty(\Omega\times\mathbb R)$ satisfying
Assumption~\ref{ass:m-good} such that $m\leq\tilde{m}$. Then
Theorem~\ref{thm:pp-eigenvalue} applied to the problem with $\tilde{m}$,
along with a similar argument to the proof of \eqref{eq:uhkest} in
Theorem~\ref{thm:unif-est}, implies that $\mu_\infty$ is an eigenvalue
of the limit problem. In particular, the vanishing set $D_m$ need only
satisfy the conditions of Assumption~\ref{ass:m-good} on a nonempty open
subset. However, in this case we cannot guarantee that the eigenfunction
of the limit problem is strictly positive, nor that it is
unique. Non-uniqueness can occur if the set $D_m$ has for instance two
connected components, both satisfying Assumption~\ref{ass:m-good}.

Nevertheless, we show now that the condition just described cannot be
omitted. In an extreme case we could consider a situation where $m>0$ on
a set $\Omega\times I$, where $I\subseteq(0,T)$ is a non-trivial
interval. Then clearly $U_\infty(T,0)=0$ and there is no
periodic-parabolic eigenvalue and eigenfunction associated with the
limit problem.

Even if $D_m$ is path-connected, such a situation can arise. In the
following example, the set $D_m$ is path-connected but any path
connecting $(x,0)$ to $(y,T)$ must go "back in time", violating
Assumption~\ref{ass:m-good}. Certainly this implies there is no
dominating function $\tilde{m}$ for $m$ satisfying
Assumption~\ref{ass:m-good}.
\begin{example}
  \label{eg:1}
  Let $0<t_0<t_1<\dots<t_5<T$, $x_0<x_1<\dots<x_5$ and let
  $\Omega=(x_0,x_5)$. Consider the problem with
  $\mathcal{A}(t)u=-\partial u/\partial x^2$ in $\Omega\times(0,T)$
  and $\mathcal{B}(t)u=u$ in $\partial\Omega\times(0,T)$. Let
  $m=\one_X$ where $X$ is given by Figure \ref{fig:counter}.
  \begin{figure}[ht]
    \centering
    \begin{tikzpicture}
      \pgfmathsetmacro\T{4}%
      \pgfmathsetmacro\w{1.6}%
      \foreach \k in{0,1,...,5}{%
        \draw[dotted] (\w,{\T*(2*\k+1)/12}) -- (-\w,{\T*(2*\k+1)/12})
        node[left] {$t_{\k}$};%
        \path ({-\w+(2*\k*\w/5)},0) node[below] {$x_{\k}$};%
      }%
      \path (-\w,0) node[left=3pt] {$0$};%
      \path (-\w,\T) node[left=1pt] {$T$};%
      \draw[fill=gray] (-\w,9*\T/12) -| (\w/5,5*\T/12) --
      (3*\w/5,5*\T/12) |- (-\w,11*\T/12);%
      \draw[fill=gray] (\w,3*\T/12) -| (-\w/5,7*\T/12) --
      (-3*\w/5,7*\T/12) |- (\w,\T/12);%
      \draw (-\w,0) rectangle (\w,\T);%
    \end{tikzpicture}
    \caption{Graph of set $X$ (shaded) and set $D_m$ (white).}
    \label{fig:counter}
  \end{figure}
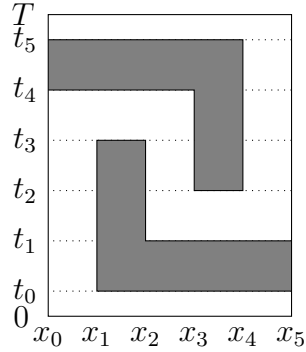
  If $\Delta_\Omega$ is the operator associated with
  $(\mathcal{A},\mathcal{B})$ on $\Omega$, then clearly
  \begin{displaymath}
    U_\lambda(t,s)=
    \begin{cases}
      \exp\bigl((t-s)(\Delta_\Omega-\lambda\one_{[x_1,x_5)}\bigr)
      &\text{if $t_0<s\le t<t_1$,}\\
      \exp\bigl((t-s)(\Delta_\Omega-\lambda\one_{[x_1,x_2]}\bigr)
      &\text{if $t_1<s\le t<t_2$,}\\
      \exp\bigl((t-s)(\Delta_\Omega-\lambda\one_{[x_1,x_2]\cup[x_3,x_4]}\bigr)
      &\text{if $t_2<s\le t<t_3$,}\\
      \exp\bigl((t-s)(\Delta_\Omega-\lambda\one_{[x_3,x_4]}\bigr)
      &\text{if $t_3<s\le t<t_4$,}\\
      \exp\bigl((t-s)(\Delta_\Omega-\lambda\one_{(x_0,x_4]}\bigr) &\text{if
        $t_4<s\le t<t_5$.}
    \end{cases}
  \end{displaymath}
  Using strong continuity we can extend extends these to $t_j\le s\le
  t\le t_{j+1}$.  Standard absorption semigroup techniques (see
  \cite{arendt:93:asd}) show that, if $\Omega_0\subset\Omega$ is open,
  \begin{equation}
    \label{eq:convergence}
    \exp\bigl(t(\Delta_\Omega-\lambda\one_{\Omega_0^c})\bigr)
    \to e^{t\Delta_{\Omega_0}}\one_{\Omega_0}
  \end{equation}
  in $\mathcal{L}(L^p(\Omega))$ for $1<p<\infty$, where
  $\Delta_{\Omega_0}$ is the Dirichlet Laplacian on
  $\Omega_0$. Moreover, if $A,B\subseteq\Omega$ are open and $A\cap
  B=\emptyset$, then
  \begin{displaymath}
    e^{t\Delta_{A\cup B}}
    =e^{t\Delta_A}\one_A+e^{t\Delta_B}\one_B.
  \end{displaymath}
  Taking $u_0\in L^p(\Omega)$ and defining
  $u_j:=U_\infty(t_j,t_{j-1})u_{j-1}$, we then find that $u_1$, $u_2$
  and $u_3$ vanish outside $(x_0,x_1)$, implying $u_4$ vanishes outside
  $(x_0,x_3)$. But then
  \begin{displaymath}
    u_5=U_\infty(t_5,t_4)u_4
    =e^{(t_5-t_4)\Delta_{(x_4,x_5)}}\one_{(x_4x_5)}u_4=0
  \end{displaymath}
  and hence $U_\infty(t_5,t_0)=0$. In particular, this means
  $U_\infty(T,0)=0$ and so the limit problem is trivial.
\end{example}

\pdfbookmark[1]{\refname}{biblio}
\providecommand{\mathbb}[1]{\mathbf{#1}}\providecommand{\cprime}{$'$}
\providecommand{\bysame}{\leavevmode\hbox to3em{\hrulefill}\thinspace}


\begin{thebibliography}{10}

\bibitem{alvarez:14:qac}
P.~{\'A}lvarez-Caudevilla, Y.~Du, and R.~Peng, \emph{Qualitative analysis of a
  cooperative reaction-diffusion system in a spatiotemporally degenerate
  environment}, SIAM J. Math. Anal. \textbf{46} (2014), 499--531.
  DOI:\,\href{http://dx.doi.org/10.1137/13091628X}{\nolinkurl{10.1137/13091628X}}

\bibitem{alvarez:08:abp}
P.~{\'A}lvarez-Caudevilla and J.~L{\'o}pez-G{\'o}mez, \emph{Asymptotic
  behaviour of principal eigenvalues for a class of cooperative systems}, J.
  Differential Equations \textbf{244} (2008), 1093--1113.
  DOI:\,\href{http://dx.doi.org/10.1016/j.jde.2007.10.004}{\nolinkurl{10.1016/j.jde.2007.10.004}}

\bibitem{alvarez:08:cabp}
\bysame, \emph{Corrigendum to ``{A}symptotic behaviour of principal eigenvalues
  for a class of cooperative systems'' {\upshape[}{J}. {D}ifferential
  {E}quations {\bfseries 244} (2008), no. 5, 1093--1113{\upshape ]}}, J.
  Differential Equations \textbf{245} (2008), 566--567.
  DOI:\,\href{http://dx.doi.org/10.1016/j.jde.2008.03.010}{\nolinkurl{10.1016/j.jde.2008.03.010}}

\bibitem{arendt:93:asd}
W.~Arendt and C.~J.~K. Batty, \emph{Absorption semigroups and {D}irichlet
  boundary conditions}, Math. Ann. \textbf{295} (1993), 427--448.
  DOI:\,\href{http://dx.doi.org/10.1007/BF01444895}{\nolinkurl{10.1007/BF01444895}}

\bibitem{arendt:08:vds}
W.~Arendt and D.~Daners, \emph{Varying domains: stability of the {D}irichlet
  and the {P}oisson problem}, Discrete Contin. Dyn. Syst. \textbf{21} (2008),
  21--39.
  DOI:\,\href{http://dx.doi.org/10.3934/dcds.2008.21.21}{\nolinkurl{10.3934/dcds.2008.21.21}}

\bibitem{aronson:67:lbs}
D.~G. Aronson and J.~Serrin, \emph{Local behavior of solutions of quasilinear
  parabolic equations}, Arch. Rational Mech. Anal. \textbf{25} (1967), 81--122.
  DOI:\,\href{http://dx.doi.org/10.1007/BF00281291}{\nolinkurl{10.1007/BF00281291}}

\bibitem{beltramo:84:pep}
A.~Beltramo and P.~Hess, \emph{On the principal eigenvalue of a
  periodic-parabolic operator}, Comm. Partial Differential Equations \textbf{9}
  (1984), 919--941.
  DOI:\,\href{http://dx.doi.org/10.1080/03605308408820351}{\nolinkurl{10.1080/03605308408820351}}

\bibitem{cano:01:cdp}
S.~Cano-Casanova and J.~L{\'o}pez-G{\'o}mez, \emph{Continuous dependence of
  principal eigenvalues with respect to perturbations of the domain araound its
  {D}irichlet boundary}, Nonlinear Anal. \textbf{47} (2001), 1797--1808.
  DOI:\,\href{http://dx.doi.org/10.1016/S0362-546X(01)00311-X}{\nolinkurl{10.1016/S0362-546X(01)00311-X}}

\bibitem{dancer:11:lps}
E.~Dancer, \emph{On the least point of the spectrum of certain cooperative
  linear systems with a large parameter}, J. Differential Equations
  \textbf{250} (2011), 33--38.
  DOI:\,\href{http://dx.doi.org/10.1016/j.jde.2010.08.012}{\nolinkurl{10.1016/j.jde.2010.08.012}}

\bibitem{daners:96:dpl}
D.~Daners, \emph{Domain perturbation for linear and nonlinear parabolic
  equations}, J. Differential Equations \textbf{129} (1996), 358--402.
  DOI:\,\href{http://dx.doi.org/10.1006/jdeq.1996.0122}{\nolinkurl{10.1006/jdeq.1996.0122}}

\bibitem{daners:00:epp}
\bysame, \emph{Existence and perturbation of principal eigenvalues for a
  periodic-parabolic problem}, Nonlinear Differential Equations (University of
  Miami 1999) (S.~Cantrell and C.~Cosner, eds.), Electronic Journal of
  Differential Equations, Conference 05, 2000, pp.~51--67 (electronic).
  Available at
  \href{https://eudml.org/doc/124177}{\nolinkurl{https://eudml.org/doc/124177}}

\bibitem{daners:00:hke}
\bysame, \emph{Heat kernel estimates for operators with boundary conditions},
  Math. Nachr. \textbf{217} (2000), 13--41.
  DOI:\,\href{http://dx.doi.org/10.1002/1522-2616(200009)217:1<13::AID-MANA13>3.3.CO;2-Y}{\nolinkurl{10.1002/1522-2616(200009)217:1<13::AID-MANA13>3.3.CO;2-Y}}

\bibitem{daners:05:pse}
\bysame, \emph{Perturbation of semi-linear evolution equations under weak
  assumptions at initial time}, J. Differential Equations \textbf{210} (2005),
  352--382.
  DOI:\,\href{http://dx.doi.org/10.1016/j.jde.2004.08.004}{\nolinkurl{10.1016/j.jde.2004.08.004}}

\bibitem{daners:08:dpl}
\bysame, \emph{Domain perturbation for linear and semi-linear boundary value
  problems}, Handbook of differential equations: stationary partial
  differential equations. {V}ol. {VI}, Handb. Differ. Equ.,
  Elsevier/North-Holland, Amsterdam, 2008, pp.~1--81.
  DOI:\,\href{http://dx.doi.org/10.1016/S1874-5733(08)80018-6}{\nolinkurl{10.1016/S1874-5733(08)80018-6}}

\bibitem{daners:09:ipg}
\bysame, \emph{Inverse positivity for general {R}obin problems on {L}ipschitz
  domains}, Arch. Math. (Basel) \textbf{92} (2009), 57--69.
  DOI:\,\href{http://dx.doi.org/10.1007/s00013-008-2918-z}{\nolinkurl{10.1007/s00013-008-2918-z}}

\bibitem{daners:13:epc}
\bysame, \emph{Eigenvalue problems for a cooperative system with a large
  parameter}, Adv. Nonlinear Stud. \textbf{13} (2013), 137--148. Available at
  \href{http://www.advancednonlinearstudies.com/Archive/V13N1/ANLS_V13N1_pg137-148.php}{\nolinkurl{http://www.advancednonlinearstudies.com/Archive/V13N1/ANLS_V13N1_pg137-148.php}}

\bibitem{daners:92:aee}
D.~Daners and P.~Koch~Medina, \emph{Abstract evolution equations, periodic
  problems and applications}, Pitman Research Notes in Mathematics Series, vol.
  279, Longman Scientific \& Technical, Harlow, Essex, 1992.

\bibitem{dautray:92:man5}
R.~Dautray and J.-L. Lions, \emph{Mathematical analysis and numerical methods
  for science and technology. {V}ol. 5}, Springer-Verlag, Berlin, 1992.
  DOI:\,\href{http://dx.doi.org/10.1007/978-3-642-58090-1}{\nolinkurl{10.1007/978-3-642-58090-1}}

\bibitem{depagter:86:ico}
B.~de~Pagter, \emph{Irreducible compact operators}, Math. Z. \textbf{192}
  (1986), 149--153.
  DOI:\,\href{http://dx.doi.org/10.1007/BF01162028}{\nolinkurl{10.1007/BF01162028}}

\bibitem{du:99:bsc}
Y.~Du and Q.~Huang, \emph{Blow-up solutions for a class of semilinear elliptic
  and parabolic equations}, SIAM J. Math. Anal. \textbf{31} (1999), 1--18.
  DOI:\,\href{http://dx.doi.org/10.1137/S0036141099352844}{\nolinkurl{10.1137/S0036141099352844}}

\bibitem{du:12:ple}
Y.~Du and R.~Peng, \emph{The periodic logistic equation with spatial and
  temporal degeneracies}, Trans. Amer. Math. Soc. \textbf{364} (2012),
  6039--6070.
  DOI:\,\href{http://dx.doi.org/10.1090/S0002-9947-2012-05590-5}{\nolinkurl{10.1090/S0002-9947-2012-05590-5}}

\bibitem{du:09:ltl}
Y.~Du and Y.~Yamada, \emph{On the long-time limit of positive solutions to the
  degenerate logistic equation}, Discrete Contin. Dyn. Syst. \textbf{25}
  (2009), 123--132.
  DOI:\,\href{http://dx.doi.org/10.3934/dcds.2009.25.123}{\nolinkurl{10.3934/dcds.2009.25.123}}

\bibitem{friedman:58:rmp}
A.~Friedman, \emph{Remarks on the maximum principle for parabolic equations and
  its applications}, Pacific J. Math. \textbf{8} (1958), 201--211. Available at
  \href{http://projecteuclid.org/euclid.pjm/1103037542}{\nolinkurl{http://projecteuclid.org/euclid.pjm/1103037542}}

\bibitem{hedberg:93:ahf}
L.~I. Hedberg, \emph{Approximation by harmonic functions, and stability of the
  {D}irichlet problem}, Exposition. Math. \textbf{11} (1993), 193--259.

\bibitem{hess:91:ppb}
P.~Hess, \emph{Periodic-parabolic boundary value problems and positivity},
  Pitman Research Notes in Mathematics Series, vol. 247, Longman Scientific \&
  Technical, Harlow, Essex, 1991.

\bibitem{keldysh:41:ssd}
M.~V. Keldych, \emph{On the solubility and the stability of {D}irichlet's
  problem}, Uspekhi Matem. Nauk \textbf{8} (1941), 171--231, English
  Translation: Amer. Math. Soc. Translations (2) \textbf{51} (1966), 1--73.

\bibitem{ladyzenskaya:68:lqp}
O.~A. Lady{\v{z}}enskaja, V.~A. Solonnikov, and N.~N. Ural{'}ceva, \emph{Linear
  and quasilinear equations of parabolic type}, American Mathematical Society,
  Providence, R.I., 1967.

\bibitem{lieberman:96:sop}
G.~M. Lieberman, \emph{Second order parabolic differential equations}, World
  Scientific Publishing Co., Inc., River Edge, NJ, 1996.
  DOI:\,\href{http://dx.doi.org/10.1142/3302}{\nolinkurl{10.1142/3302}}

\bibitem{lions:69:qmr}
J.~L. Lions, \emph{{Quelques m\'ethodes de r\'esolution des probl\`emes aux
  limites non lin\'eaires}}, Dunod, Paris, 1969.

\bibitem{lions:72:nbv}
J.-L. Lions and E.~Magenes, \emph{Non-homogeneous boundary value problems and
  applications. {V}ol. {I}}, Springer-Verlag, New York, 1972, Translated from
  the French by P. Kenneth, Die Grundlehren der mathematischen Wissenschaften,
  Band 181.
  DOI:\,\href{http://dx.doi.org/10.1007/978-3-642-65161-8}{\nolinkurl{10.1007/978-3-642-65161-8}}

\bibitem{lopez:96:mpe}
J.~L{\'o}pez-G{\'o}mez, \emph{The maximum principle and the existence of
  principal eigenvalues for some linear weighted boundary value problems}, J.
  Differential Equations \textbf{127} (1996), 263--294.
  DOI:\,\href{http://dx.doi.org/10.1006/jdeq.1996.0070}{\nolinkurl{10.1006/jdeq.1996.0070}}

\bibitem{peng:15:lbc}
R.~Peng, \emph{Long-time behavior of a cooperative periodic-parabolic system:
  temporal degeneracy versus spatial degeneracy}, Calc. Var. Partial
  Differential Equations \textbf{53} (2015), 179--219.
  DOI:\,\href{http://dx.doi.org/10.1007/s00526-014-0745-6}{\nolinkurl{10.1007/s00526-014-0745-6}}

\bibitem{protter:84:mpd}
M.~H. Protter and H.~F. Weinberger, \emph{Maximum principles in differential
  equations}, Springer-Verlag, New York, 1984, Corrected reprint of the 1967
  original.
  DOI:\,\href{http://dx.doi.org/10.1007/978-1-4612-5282-5}{\nolinkurl{10.1007/978-1-4612-5282-5}}

\bibitem{stummel:74:pts}
F.~Stummel, \emph{Perturbation theory for {S}obolev spaces}, Proc. Roy. Soc.
  Edinburgh Sect. A \textbf{73} (1975), 5--49.
  DOI:\,\href{http://dx.doi.org/10.1017/S0308210500016267}{\nolinkurl{10.1017/S0308210500016267}}

\end{thebibliography}
\end{document}